\icmltitlerunning{Lyapunov Functions for First-Order Methods}
\newcommand{\x}{\mathbf{x}}
\newcommand{\y}{\mathbf{y}}
\newcommand{\f}{\mathbf{f}}
\newcommand{\g}{\mathbf{g}}
\newcommand{\G}{\mathbf{G}}
\newcommand{\V}{\mathcal{V}}
\newcommand{\B}{\mathbf{B}}
\newcommand{\bax}{\bar{x}}
\newcommand{\baz}{\bar{z}}
\newcommand{\bay}{\bar{y}}
\newcommand{\bag}{\bar{g}}
\newcommand{\baf}{\bar{f}}
\newcommand{\I}{\mathcal{I}}
\newcommand{\F}{\mathcal{F}_{\mu,L}}
\newcommand{\kron}{\otimes}
\newcommand{\df}{\nabla\!f}
\newcommand{\0}{\mathbf{0}}
\newcommand{\1}{\mathbf{1}}
\newcommand{\e}{\mathbf{e}}
\newcommand{\rank}{\mathrm{Rank}}
\newcommand{\real}{\mathbb{R}}
\newcommand{\nat}{\mathbb{N}}
\newcommand{\for}{\quad\textup{for }}
\DeclareMathOperator*{\feasible}{feasible}
\newcommand{\sdp}{(\textcolor{blue}{\ensuremath{\rho}-SDP})}
\definecolor{colorGM}{RGB}{55,126,184}  
\definecolor{colorHBM}{RGB}{228,26,28}  
\definecolor{colorFGM}{RGB}{152,78,163} 
\definecolor{colorTMM}{RGB}{77,175,74}  
\pgfplotsset{compat=1.13}
\pgfplotsset{plotOptions/.style={%
  width=\linewidth,
  y post scale=0.8,
	xlabel={Condition ratio $\kappa$},
	ylabel={Worst-case convergence rate $\rho$},
	label style={font=\small},
	legend style={font=\small},
	legend pos=south east,
	legend cell align=left,
  xtick={1,10,100,1000},
  tick label style={font=\footnotesize},
  no markers,
  solid,
  very thick}}
\pgfplotsset{MyStyle2/.style={%
		width=\linewidth,
		y post scale=0.8,
		xlabel={Condition ratio $\kappa$},
		ylabel={Evaluations to convergence $-1/\log \rho$},
		label style={font=\small},
		legend style={font=\small},
		legend pos=south east,
		legend cell align=left,
		xtick={1,10,100,1000},
		tick label style={font=\footnotesize},
		no markers,
		solid,
		very thick}}
\begin{document}
	
	\twocolumn[
	\icmltitle{Lyapunov Functions for First-Order Methods:\\ Tight Automated Convergence Guarantees}
	
	
	
	\icmlsetsymbol{equal}{*}
	
	\begin{icmlauthorlist}
		\icmlauthor{Adrien Taylor}{equal,SIERRA}
		\icmlauthor{Bryan Van Scoy}{equal,UWM}
		\icmlauthor{Laurent Lessard}{equal,UWM,ECE}
	\end{icmlauthorlist}
	
	\icmlaffiliation{UWM}{Wisconsin Institute for Discovery, University of Wisconsin--Madison, Madison, Wisconsin, USA}
	\icmlaffiliation{ECE}{Department of Elecrical and Computer Engineering, University of Wisconsin--Madison, Madison, Wisconsin, USA}
	\icmlaffiliation{SIERRA}{INRIA, D\'epartement d'informatique de l'ENS, \'Ecole normale sup\'erieure, CNRS, PSL Research University, Paris, France}
	
	\icmlcorrespondingauthor{Adrien Taylor}{adrien.taylor@inria.fr}
	\icmlcorrespondingauthor{Bryan Van Scoy}{vanscoy@wisc.edu}
	\icmlcorrespondingauthor{Laurent Lessard}{\mbox{laurent.lessard@wisc.edu}}
	
	\icmlkeywords{First-order optimization, Lyapunov function, Performance estimation problem, Integral quadratic constraints}
	
	\vskip 0.3in
	]
	
	
	
	\printAffiliationsAndNotice{\icmlEqualContribution} 
	
	\begin{abstract}		
		We present a novel way of generating Lyapunov functions for proving linear convergence rates of first-order optimization methods. Our approach provably obtains the \emph{fastest} linear convergence rate that can be verified by a quadratic Lyapunov function (with given states), and only relies on solving a small-sized semidefinite program. 	
		Our approach combines the advantages of performance estimation problems (PEP, due to \citet{drori2014}) and integral quadratic constraints (IQC, due to \citet{lessard2016}), and relies on convex interpolation (due to \citet{taylor2017smooth,taylor2017exact}).
	\end{abstract}

	\section{Introduction}
	In this work, we study first-order methods for solving the (unconstrained) minimization problem
	\begin{align}\label{Eq:prob}
	\minimize_{x\in\real^d} \ f(x) \tag{$\mathcal{P}$}
	\end{align}
	where $f : \real^d\to\real$. In the sequel, we focus on the case where $f$ is $L$-smooth and $\mu$-strongly convex, though our methodology can be adapted to a broader class of problems.
	
	To solve~\eqref{Eq:prob}, we consider methods that iteratively update their estimate of the optimizer using only gradient evaluations. One possibility for proving convergence of such methods is by finding \emph{Lyapunov functions}.
	
	A Lyapunov function can be interpreted as defining an ``energy'' that decreases geometrically with each iteration of the method, with an energy of zero corresponding to reaching the optimal solution of~\eqref{Eq:prob}. The existence of such an energy function thus provides a straightforward certificate of linear convergence for the iterative method.
	
	In this paper, we present an automated way of generating quadratic Lyapunov functions for certifying linear convergence of first-order iterative methods to solve~\eqref{Eq:prob}. The procedure relies on solving a small-sized semidefinite program (SDP) so it is computationally efficient. Moreover, the procedure is \emph{tight}, meaning that if the SDP is infeasible, then no such quadratic Lyapunov function exists.
	
	Our results unify recent SDP-based works for certifying convergence of first-order methods, namely: performance estimation problems~\cite{drori2014,taylor2017smooth} and integral quadratic constraints from robust control~\cite{lessard2016}, using smooth strongly convex interpolation~\cite{taylor2017smooth}. These connections are further discussed in Section~\ref{Sec:comparison_to_pep_iqc}.
	
	\subsection{Organization}
	The paper is organized as follows. We describe the class of methods under consideration and basic properties of Lyapunov functions in Sections~\ref{Sec:method} and~\ref{Sec:lyap?} respectively. Our main results are then presented in Section~\ref{sec:main_results}, which also features numerical examples and comparisons to other approaches. The corresponding proof is presented in Section~\ref{Sec:proof}. Finally, we explore extensions of our approach in Section~\ref{Sec:extensions}, and conclude in Section~\ref{Sec:conclusion}.
	
	\subsection{Preliminaries}
	A function $f : \real^d\to\real$ is called $L$-smooth if its gradient is Lipschitz continuous with parameter $L$, i.e.,
	\begin{align}\label{Eq:smooth}
	\|\df(x)-\df(y)\| \leq L\,\|x-y\| \quad\text{for all }x,y\in\real^d.
	\end{align}
	Furthermore, $f$ is called convex if
	\begin{align}\label{Eq:strongly_convex}
	f(x) \geq f(y) + \df(y)^\tp (x-y) \quad\text{for all }x,y\in\real^d,
	\end{align}
	and $\mu$-strongly convex if $f(x)-\tfrac{\mu}{2} \|x\|^2$ is convex. The set of $L$-smooth and $\mu$-strongly convex functions is denoted~$\F$, and we define $\kappa\defeq \frac{L}{\mu}$, the corresponding condition number.
	
	When $f\in\F$ with $0<\mu\leq L$, optimization problem~\eqref{Eq:prob} has a unique minimizer denoted ${x_\star\defeq\argmin_x f(x)}$. The function and gradient values at optimality are denoted $f_\star\defeq f(x_\star)$ and $g_\star\defeq\df(x_\star)=\0_d$, respectively.

	\section{First-Order Iterative Fixed-Step Methods}\label{Sec:method}
	To solve the optimization problem~\eqref{Eq:prob}, we consider \emph{first-order iterative fixed-step methods} of the form
	\begin{align}\label{Eq:method}\tag{$\mathcal{M}$}
	\begin{aligned}
	y_k     &= \sum_{j=0}^N \gamma_j\,x_{k-j} \\
	x_{k+1} &= \sum_{j=0}^N \beta_j\,x_{k-j} - \alpha\,\df(y_k)
	\end{aligned}
	\end{align}
	for $k\geq 0$ where $\alpha$, $\beta_j$, $\gamma_j$ are the (fixed) step-sizes and $x_j\in\real^d$ for $j=-N,\ldots,0$ are the initial conditions. We call the constant $N\geq 0$ the \emph{degree} of the method.
	
	Many first-order optimization methods are of the form~\eqref{Eq:method}, including: the Gradient Method, Heavy Ball Method~\cite{polyak1964}, Fast Gradient Method for smooth strongly convex minimization~\cite{nesterov2004}, Triple Momentum Method~\cite{vanscoy2018}, and Robust Momentum Method~\cite{cyrus2018}.
	
	For method~\eqref{Eq:method} to solve~\eqref{Eq:prob}, it must have a fixed-point at the optimizer $x_\star$. Hence, we require the step-sizes to satisfy
	\begin{align*}
	\sum_{j=0}^N \beta_j  &= 1  &  &\text{and}  &
	\sum_{j=0}^N \gamma_j &= 1.
	\end{align*}
	For simplicity, let us define the concatenated error vectors at iteration $k$ as $\x_k,\g_k\in\real^{(N+1)d}$ and $\f_k\in\real^{N+1}$ with
	\begin{subequations}\label{Eq:vectors}
		\begin{align}
		\x_k &\defeq \bmat{(x_k-x_\star)^\tp & \ldots & (x_{k-N}-x_\star)^\tp}^\tp \\
		\g_k &\defeq \bmat{(g_k-g_\star)^\tp & \ldots & (g_{k-N}-g_\star)^\tp}^\tp \\
		\f_k &\defeq \bmat{(f_k-f_\star) & \ldots & (f_{k-N}-f_\star)}^\tp
		\end{align}
	\end{subequations}
	where $x_k\in\real^d$ are the iterates, $f_k\defeq f(y_k)\in\real$ are the function values, and $g_k\defeq\df(y_k)\in\real^d$ are the gradient values. Note that we shifted $(\x_k,\g_k,\f_k)$ so that the optimal solution corresponds to $(\x_\star,\g_\star,\f_\star)=(\0,\0,\0)$.

	\section{What is a Lyapunov Function?}\label{Sec:lyap?}
	Lyapunov functions are one of the fundamental tools in control theory that can be used to verify stability of a dynamical system~\cite{kalman1960-1,kalman1960-2}.
	
	Consider applying method~\eqref{Eq:method} to solve problem~\eqref{Eq:prob}. Our goal is to find the smallest possible $0\le \rho < 1$ such that $\{x_k\}$ converges linearly to the optimizer $x_\star$ with rate $\rho$. A \textit{Lyapunov function} $\V$ is a continuous function ${\V : \real^n\to\real}$ that satisfies the following properties:
	\begin{enumerate}
		\item (nonnegative) $\V(\xi)\geq 0$ for all $\xi$,
		\item (zero at fixed-point) $\V(\xi)=0$ if and only if $\xi=\xi_\star$,
		\item (radially unbounded) $\V(\xi)\to\infty$ as $\|\xi\|\to\infty$,
		\item (decreasing) $\V(\xi_{k+1})\leq\rho^2\,\V(\xi_k)$ for $k\geq N$,
	\end{enumerate}
	where $\xi_k\defeq (\x_k,\g_k,\f_k)$ is the \emph{state} of the system at iteration $k$. The state at iteration $k$ includes past iterates, function values, and gradient values from iterations $k-N$ up to $k$.
	If we can find such a $\V$, then it can be used to show that the state converges linearly to the fixed-point from any initial condition (the rate of convergence depends on both $\rho$ and the structure of $\V$).
	
	Lyapunov functions are typically found by searching over a parameterized family of functions (called Lyapunov function candidates). In the simple case where the state $\{\xi_k\}$ is generated by a linear dynamical system, one can search over quadratic Lyapunov function candidates by solving a semidefinite program, as illustrated in Example~\ref{Ex:quad_lyap_fun} below.

	\begin{ex}[Quadratic Lyapunov function]\label{Ex:quad_lyap_fun}
		Consider the linear dynamical system described by
		\begin{align*}
		\xi_{k+1} = A \xi_k, \qquad \xi_0\in\real^n
		\end{align*}
		with fixed-point $\xi_\star\in\real^n$ (i.e., $\xi_\star = A \xi_\star$). Suppose that
		\begin{align}\label{Eq:linear_system_SDP}
		\smash[b]{\feasible_{P\in\mathbb{S}^n}} \quad 0\succeq A^\tp P A - \rho^2 P, \quad P\succ 0
		\end{align}
		has solution $P_\star$. Then a Lyapunov function for the system is
		\begin{align*}
		\V(\xi) = (\xi-\xi_\star)^\tp P_\star (\xi-\xi_\star)
		\end{align*}
		which can be used to show that $\xi_k\to \xi_\star$ linearly with rate~$\rho$. Specifically, we have the bound
		\[
		\|\xi_k-\xi_\star\|_{P_\star}\leq \rho^k \,\|\xi_0-\xi_\star\|_{P_\star} \for k\geq 0.
		\]
		To find the best bound, we can perform a bisection search on $\rho$ to find the smallest $\rho$ such that~\eqref{Eq:linear_system_SDP} is feasible.
	\end{ex}
	
	Note that although $\V$ depends explicitly on the fixed point $\xi_\star$, we do not need to know $\xi_\star$ to solve the SDP~\eqref{Eq:linear_system_SDP}.
	
	The linear dynamical system of Example~\ref{Ex:quad_lyap_fun} converges linearly if and only if a quadratic Lyapunov function exists, which happens if and only if the SDP~\eqref{Eq:linear_system_SDP} is feasible~\cite{lyapunov,vidyasagar2002}.

	\section{Main Results}\label{sec:main_results}
	
	Similar to Example~\ref{Ex:quad_lyap_fun}, we now show how to use quadratic Lyapunov functions to prove linear convergence of a first-order iterative fixed-step method applied to the minimization of a smooth strongly convex function. Furthermore, we show that such Lyapunov function exists if and only if a small-sized semidefinite program is feasible (whose optimal solution produces the Lyapunov function). 
	
	\subsection{Quadratic Lyapunov Functions}
	We begin with sufficiency: if we can find a quadratic Lyapunov function, we can use it to prove linear convergence.
	
	\begin{lem}[Quadratic Lyapunov function]\label{Lem:lyap}
		Consider applying the first-order iterative fixed-step method~\eqref{Eq:method} of degree $N$ to a smooth strongly convex function ${f\in\F(\real^d)}$ with ${0<\mu\leq L}$. Define the state $\xi_k\defeq (\x_k,\g_k,\f_k)$  as in~\eqref{Eq:vectors}. Consider the quadratic function
		\begin{align}\label{Eq:lyap}
		\V(\xi_k) = \bmat{\x_k \\ \g_k}^\tp\!\!\! (P\otimes I_d) \bmat{\x_k \\ \g_k} + p^\tp \f_k\quad\text{for $k \ge N$}
		\end{align}
		with parameters $P\in\mathbb{S}^{2(N+1)}$ and $p\in\real^{N+1}$, and where~$\otimes$ denotes the Kronecker product. Suppose $\V$ is a Lyapunov function for the system with rate $\rho$. Then, the following bound is satisfied:
		\begin{equation}\label{Eq:bound}
		\V(\xi_k) \leq \rho^{2(k-N)}\,\V(\xi_N) \for k\geq N.
		\end{equation}
	\end{lem}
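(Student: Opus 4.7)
The statement is essentially just an iterated application of the defining decrease property of a Lyapunov function, so my plan is very short. By hypothesis, $\V$ is a Lyapunov function for the system at rate $\rho$, which by property~4 of the definition in Section~\ref{Sec:lyap?} means the one-step contraction $\V(\xi_{k+1}) \leq \rho^2\,\V(\xi_k)$ holds for every $k\geq N$. The claimed bound~\eqref{Eq:bound} is nothing more than this inequality iterated $k-N$ times.

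Concretely, I would carry out a direct induction on $k\geq N$. The base case $k=N$ is the trivial equality $\V(\xi_N) = \rho^{0}\,\V(\xi_N)$. For the inductive step, assume $\V(\xi_k)\leq \rho^{2(k-N)}\,\V(\xi_N)$ for some $k\geq N$; then applying the one-step decrease yields $\V(\xi_{k+1})\leq \rho^2\,\V(\xi_k)\leq \rho^{2(k+1-N)}\,\V(\xi_N)$, closing the induction.

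I do not expect any real obstacle here, as the particular quadratic structure~\eqref{Eq:lyap} of $\V$ plays no role in this argument: the lemma merely repackages the one-step decrease into a geometric decay. The genuinely nontrivial content, namely verifying that a quadratic of the form~\eqref{Eq:lyap} really is a Lyapunov function for~\eqref{Eq:method} on the class $\F$ (nonnegativity, decrease against every admissible $f$, etc.), is outside the scope of this lemma and is the business of the SDP-based main results developed in Section~\ref{Sec:proof}.
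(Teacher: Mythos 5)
Your proposal is correct and matches the paper's argument in substance: the paper multiplies the one-step decrease $0\geq\V(\xi_{i+1})-\rho^2\,\V(\xi_i)$ by $\rho^{2(k-i-1)}$ and telescopes the sum over $i=N,\ldots,k-1$, which is just your induction written as a summation. Your closing remark that the quadratic structure of $\V$ plays no role here is also accurate.
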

	\begin{proof}
		Suppose $\V$ is a Lyapunov function for method~\eqref{Eq:method} with $f\in\F$. Then $0\geq\V(\xi_{i+1})-\rho^2\,\V(\xi_i)$ for $i\geq N$. Multiplying this inequality by $\rho^{2(k-i-1)}$ and summing over $i=N,\ldots,k-1$ gives a telescoping sum that yields~\eqref{Eq:bound}.
	\end{proof}
	As a consequence to Lemma~\ref{Lem:lyap}, we have the relations:
	\begin{subequations}\label{Eq:rate}
		\begin{align}
		\|x_k-x_\star\| &= \mathcal{O}(\rho^k) \\
		\|\df(y_k)\|    &= \mathcal{O}(\rho^k) \\
		f(y_k)-f_\star  &= \mathcal{O}(\rho^{2k})
		\end{align}
	\end{subequations}
	where $x_\star\in\real^d$ is the optimizer of~\eqref{Eq:prob} and $f_\star\defeq f(x_\star)$.
	
	\begin{rem}
	  The Lyapunov function~\eqref{Eq:lyap} is only defined for $k\geq N$ since the state $\xi_k$ is a function of the previous $N$ function and gradient values. This is why the bound~\eqref{Eq:bound} is expressed in terms of $\V(\xi_N)$.
	\end{rem}
	\begin{rem}
		The states used in the Lyapunov function~\eqref{Eq:lyap} can be modified to  include other iterates (such as~$y_k$) in the quadratic term as well as the function and gradient values evaluated at iterates other than~$y_k$. We chose the form in~\eqref{Eq:lyap} because it contains all necessary ingredients while also being straightforward to generalize to other cases.
		
		In addition, note that the structure of~\eqref{Eq:lyap} makes it \emph{permutation-invariant} (i.e., it does not depend on the ordering of the coordinate set). This is largely motivated by the fact that there is no reason to favor any coordinate among $\real^d$.
	\end{rem}
	Lemma~\ref{Lem:lyap} shows that if we can find a quadratic Lyapunov function, then we can use this to prove linear convergence of method~\eqref{Eq:method} when $f\in\F$. In the following section, we construct an SDP whose feasibility is necessary and sufficient for the existence of such a Lyapunov function.
	
	\subsection{SDP for Quadratic Lyapunov Functions}\label{Sec:sdp}
	
	Given parameters $\alpha$, $\beta_j$, and $\gamma_j$ for a method~\eqref{Eq:method} of degree $N$ and a rate $\rho$ to be verified, we construct the semidefinite program as follows.
	
	\paragraph{Step 1: Initialization.} First, we initialize the row vectors $\bar{x}_k^{(K)},\bar{g}_k^{(K)}\in\real^{N+K+2}$ and $\bar{f}_k^{(K)}\in\real^{K+1}$, corresponding to the initial conditions, gradient values, and  function values, respectively, as
	\begin{subequations}\label{Eq:xgf_vectors}
		\begin{alignat}{2}
		&\bar{x}_k^{(K)} \defeq \mathbf{e}_{k+N+1}^\tp &&\for k\in\{-N,\ldots,0\} \\
		&\bar{g}_k^{(K)} \defeq \mathbf{e}_{k+N+2}^\tp &&\for k\in\{0,\ldots,K\} \\
		&\bar{f}_k^{(K)} \defeq \mathbf{e}_{k+1}^\tp   &&\for k\in\{0,\ldots,K\}
		\end{alignat}
	\end{subequations}
	for $K\in\{N,N+1\}$ ($\mathbf{e}_i$ denotes the $i^\text{th}$ unit vector with appropriate dimension). These form a basis for all iterates, function values, and gradient values up to iteration~$K$. Also, define the row vectors corresponding to the fixed-point as
	\begin{align*}
	\bar{y}_\star^{(K)} &\defeq \0_{N+K+2}^\tp,  &
	\bar{g}_\star^{(K)} &\defeq \0_{N+K+2}^\tp,  &
	\bar{f}_\star^{(K)} &\defeq \0_{K+1}^\tp.
	\end{align*}
	We also introduce the following SDP variables:
	\begin{align*}
	P &\in\mathbb{S}^{2(N+1)}, & \lambda_{ij} &\in\real\for i,j\in\I_N, \\
	p &\in\real^{N+1}, & \eta_{ij}    &\in\real\for i,j\in\I_{N+1},
	\end{align*}
	where $\I_K \defeq\{0,1,\dots,K,\star\}$ is an index set.
	
	\paragraph{Step 2: Method.} Next, we iterate the method for ${k=0,\ldots,K}$ using the row vectors we previously defined.
	\begin{subequations}\label{Eq:method_vectors}
		\begin{align}
		\bar{y}_k^{(K)}     &= \sum_{j=0}^N \gamma_j\,\bar{x}_{k-j}^{(K)} \\
		\bar{x}_{k+1}^{(K)} &= \sum_{j=0}^N \beta_j\,\bar{x}_{k-j}^{(K)} - \alpha\,\bar{g}_k^{(K)}.
		\end{align}
	\end{subequations}
	
	\paragraph{Step 3: Interpolation conditions\footnote{The terms $M_{ij}^{(K)}$ and $m_{ij}^{(K)}$ are related to \emph{interpolation} by smooth strongly convex functions as discussed in Section~\ref{Sec:interp}}.} Using the computed vectors, define $m_{ij}^{(K)}\in\real^{K+1}$ and $M_{ij}^{(K)}\in\mathbb{S}^{N+K+2}$ as
	\begin{subequations}\label{Eq:Mij}
		\begin{align}
		m_{ij}^{(K)} &\defeq (L-\mu) \bigl(\bar{f}_i^{(K)}-\bar{f}_j^{(K)}\bigr)^\tp \\
		M_{ij}^{(K)} &\defeq \frac{1}{2}
		\bmat{\bar{y}_i^{(K)}\\[1mm] \bar{y}_j^{(K)}\\[1mm] \bar{g}_i^{(K)}\\[1mm] \bar{g}_j^{(K)}}^\tp \!\!\!\! M
		\bmat{\bar{y}_i^{(K)}\\[1mm] \bar{y}_j^{(K)}\\[1mm] \bar{g}_i^{(K)}\\[1mm] \bar{g}_j^{(K)}}
		\end{align}
	\end{subequations}
	for $i,j\in\I_K$ where
	\begin{align}\label{Eq:M}
	M \defeq \begin{bmatrix*}[r]
	-\mu L  & \mu L & \mu  & -L \\
	\mu L  & -\mu L & -\mu  & L \\
	\mu  & -\mu & -1 & 1 \\
	-L & L & 1 & -1 \end{bmatrix*}\!.
	\end{align}	
	
	\paragraph{Step 4: Lyapunov function.} We now construct the linear and quadratic terms in the Lyapunov function, denoted ${v_k^{(K)}\in\real^{K+1}}$ and ${V_k^{(K)}\in\mathbb{S}^{N+K+2}}$, respectively, as
	\begin{subequations}\label{Eq:Vk}
		\begin{align}
		v_k^{(K)} &\defeq p^\tp\, \bar{\f}_k^{(K)} \\
		V_k^{(K)} &\defeq \bmat{\bar{\x}_k^{(K)} \\[1mm] \bar{\g}_k^{(K)}}^\tp\!\! P \bmat{\bar{\x}_k^{(K)} \\[1mm] \bar{\g}_k^{(K)}}
		\end{align}
	\end{subequations}
	where the matrices $\bar{\x}_k^{(K)},\bar{\g}_k^{(K)}\in\real^{(N+1)\times(N+K+2)}$ and ${\bar{\f}_k^{(K)}\in\real^{(N+1)\times(K+1)}}$ are defined as
	\begin{align*}
	\bar{\x}_k^{(K)} \!&\defeq \bmat{\bar{x}_{k}^{(K)} \\ \vdots \\ \bar{x}_{k-N}^{(K)}}  &
	\bar{\g}_k^{(K)} \!&\defeq \bmat{\bar{g}_{k}^{(K)} \\ \vdots \\ \bar{g}_{k-N}^{(K)}}  &
	\bar{\f}_k^{(K)} \!&\defeq \bmat{\bar{f}_{k}^{(K)} \\ \vdots \\ \bar{f}_{k-N}^{(K)}}\!.
	\end{align*}
	Also, define the decrease in the linear and quadratic terms of the Lyapunov function as
	\begin{subequations}\label{Eq:dV}
		\begin{align}
		\Delta v_k^{(K)} &\defeq v_{k+1}^{(K)} - \rho^2\,v_k^{(K)} \\
		\Delta V_k^{(K)} &\defeq V_{k+1}^{(K)} - \rho^2\,V_k^{(K)}
		\end{align}
		where $\rho$ is the convergence rate to be verified.
	\end{subequations}
	
	\paragraph{Step 5: Semidefinite program.} Finally, we compute the quadratic Lyapunov function (if one exists) for a given rate~$\rho$ by solving the following semidefinite program:
	
	\medskip
	\fbox{
		\begin{minipage}{0.96\columnwidth}
			\textbf{SDP for quadratic Lyapunov function \sdp}
			\begin{align*}
			\smash[b]{\feasible_{\substack{P\in\mathbb{S}^{2(N+1)}\\p\in\real^{N+1}\\ \{\lambda_{ij}\}\\ \{\eta_{ij}\}}}}
			& \quad 0 \prec V_N^{(N)} - \sum_{i,j\in\I_N} \lambda_{ij}\,M_{ij}^{(N)} \\
			& \quad 0 < v_N^{(N)} - \sum_{i,j\in\I_N} \lambda_{ij}\,m_{ij}^{(N)} \\
			& \quad 0 \succeq \Delta V_N^{(N+1)} + \!\!\!\sum_{i,j\in\I_{N+1}} \!\!\eta_{ij}\,M_{ij}^{(N+1)} \\
			& \quad 0 \geq \Delta v_N^{(N+1)} + \!\!\!\sum_{i,j\in\I_{N+1}} \!\! \eta_{ij}\,m_{ij}^{(N+1)} \\
			& \quad 0 \leq \lambda_{ij} \for i,j\in\I_N \\
			& \quad 0 \leq \eta_{ij} \for i,j\in\I_{N+1}
			\end{align*}
		\end{minipage}}
		\smallskip
		\newpage
		\begin{thm}[Main Result]\label{Thm:main}
			Consider applying the first-order iterative fixed-step method~\eqref{Eq:method} of degree $N$ to a smooth strongly convex function ${f\in\F(\real^d)}$ with ${0<\mu\leq L}$. Let the step-sizes $\alpha$, $\beta_j$, and $\gamma_j$ be such that $\alpha\neq 0$, $\gamma_0\neq 0$, and
			\[
			\sum_{j=0}^N \beta_j = \sum_{j=0}^N \gamma_j = 1.
			\]
			Then there exists a quadratic Lyapunov function of the form~\eqref{Eq:lyap} with rate $\rho$ that is valid for all $d\in\nat$ if and only if \textup{\sdp} is feasible.
		\end{thm}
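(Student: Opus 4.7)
The plan is to prove the two directions separately, and in both cases the bridge between the ``symbolic'' SDP data and the ``analytic'' Lyapunov statement is the convex interpolation theorem of \citet{taylor2017smooth}, which says that a finite collection of triples $\{(y_i,g_i,f_i)\}$ is $\F$-interpolable if and only if the pairwise inequalities encoded by $M_{ij}$ and $m_{ij}$ in \eqref{Eq:Mij} hold.

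\textbf{Sufficiency.} Assume $(P,p,\{\lambda_{ij}\},\{\eta_{ij}\})$ is feasible for \sdp\ and define $\V$ by \eqref{Eq:lyap} using these $P$ and $p$. The row-vector construction \eqref{Eq:xgf_vectors}--\eqref{Eq:method_vectors} is a basis-free representation of iterates, gradients and function values; substituting the actual $(x_k,g_k,f_k)$ for the unit-vector placeholders (and using that the $P$-block of \eqref{Eq:lyap} is tensored with $I_d$) turns the symbolic matrix inequalities of \sdp\ into scalar inequalities involving $\V$. Specifically, the first two SDP constraints together with the interpolation inequalities $M_{ij}+m_{ij}\le 0$ (weighted by $\lambda_{ij}\ge 0$) give $\V(\xi_N)\ge 0$, with strict positivity and radial unboundedness coming from the strict $\succ 0$ and $>0$. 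The third and fourth SDP constraints, weighted by $\eta_{ij}\ge 0$ via interpolation at indices in $\I_{N+1}$, give $\V(\xi_{N+1})-\rho^2\V(\xi_N)\le 0$; shift-invariance of the method recursion yields the same inequality for every $k\ge N$.

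\textbf{Necessity.} Conversely, suppose some $\V$ of the form \eqref{Eq:lyap} is a valid Lyapunov function for every $f\in\F(\real^d)$ and every $d\in\nat$. Equivalently, for every $\F$-interpolable collection of triples $\{(y_i,g_i,f_i)\}_{i\in\I_{N+1}}$ that is consistent with the method recursion, one must have (a) $\V(\xi_N)\ge 0$ with strict positivity off the fixed point, and (b) $\V(\xi_{N+1})-\rho^2\V(\xi_N)\le 0$. Because $d$ is unrestricted, the Gram matrix of the $(y_i,g_i)$ can be any PSD matrix of appropriate size, so both implications lift to pure SDP implications with no rank obstruction. The interpolation theorem turns $\F$-interpolability into the family of linear matrix inequalities $M_{ij}+m_{ij}\le 0$, and applying Lagrangian (S-procedure / SDP Farkas) duality to each of the two implications produces nonnegative multipliers $\lambda_{ij}$ and $\eta_{ij}$ that realize the four constraints of \sdp.

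\textbf{Main obstacle.} The delicate step is the duality argument in the necessity direction: one must argue that aggregating interpolation inequalities by scalar multipliers is \emph{lossless}, i.e., the SDP relaxation is tight. This rests on three pieces working together: (i) the Taylor--Van Scoy--Lessard interpolation result provides an exact, rather than relaxed, characterization of $\F$; (ii) the dimension-independence assumption eliminates any rank obstruction when lifting to an SDP; and (iii) the nondegeneracy assumptions $\alpha\neq 0$ and $\gamma_0\neq 0$ (together with $\sum\beta_j=\sum\gamma_j=1$) ensure a Slater-type point so that strong duality applies. A careful accounting of the two separate problems---positivity of $\V$ at step $N$ versus one-step decrease from $N$ to $N+1$---explains why the SDP uses the two index sets $\I_N$ and $\I_{N+1}$ with distinct multipliers $\lambda$ and $\eta$, and why the construction is carried out for both $K=N$ and $K=N+1$.
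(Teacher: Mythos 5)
Your proposal follows essentially the same route as the paper: sufficiency by aggregating the interpolation inequalities of Theorem~\ref{Thm:interp} with nonnegative multipliers, and necessity by a Gram-matrix lifting whose rank constraint is inactive because $d$ is unrestricted, combined with strong duality certified by a Slater point built from the nondegeneracy assumptions $\alpha\neq 0$ and $\gamma_0\neq 0$ (the paper packages both directions as Theorem~\ref{Thm:PD_quad}, applied once with $K=N$ for positivity and once with $K=N+1$ for the decrease --- exactly the split you identify). One cosmetic slip: the interpolation conditions assert $\phi_{ij}\geq 0$, where $\phi_{ij}$ is the quadratic form built from $M_{ij}$ plus the linear term in $m_{ij}$, not ``$M_{ij}+m_{ij}\le 0$''; your subsequent correct use of them with nonnegative weights shows this is only a typo.
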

		
		From Theorem~\ref{Thm:main}, we can perform bisection on~$\rho$ to find the minimum~$\rho$ such that \sdp\ is feasible to produce the \emph{fastest} linear convergence rate that is able to be verified using a quadratic Lyapunov function with states $(\x,\g,\f)$.
		
		\subsection{Comparison to PEP and IQC}\label{Sec:comparison_to_pep_iqc}
		Our results are closely related to several other recent approaches utilizing semidefinite programs for studying convergence of first-order methods, which we discuss now.
		
		\paragraph{Performance Estimation Problem (PEP).}
		The performance estimation approach was introduced by~\citet{drori2014} as a systematic way to obtain worst-case performance guarantees of a given method. In the context of fixed-step first-order methods, performance estimation problems (PEP) can be formulated as semidefinite programs.
		
		The key idea in PEP is to look for a tuple $(x_{-N},\hdots,x_0,f)$ such that the given algorithm behaves in the worst possible way, according to a given performance measure. The dual of PEP corresponding to the performance measure 
		$\V(\xi_{N+1})/\V(\xi_N)$ for some fixed $P$ and $p$ is exactly the same as solving $\min_\rho \rho^2$ subject to~\sdp\ being feasible.
		
		The difference between PEP and our approach is that in PEP, the optimization is for a fixed performance measure carried out over multiple timesteps. This yields exact worst-case bounds, but at the cost of solving an SDP whose size is proportional to the number of timesteps (this allows, among others, dealing with time-varying methods and sublinear convergence rates). In our approach, for a fixed $\rho$, we optimize \textit{the performance measure itself}. This yields a Lyapunov function with a guaranteed decrease at every iteration while (1) maintaining tightness and (2) solving a small SDP of fixed size.
		Both approaches ensure tightness via \emph{(smooth) convex interpolation}, developed by~\citet{taylor2017smooth}.

		\paragraph{Integral Quadratic Constraints (IQCs).}
		Integral quadratic constraints are an analysis method for bounding the worst-case performance of dynamical systems in feedback with nonlinearities~\cite{megretski1997}. This approach was recently adapted for use in analyzing first-order optimization algorithms~\cite{lessard2016}.
		In the optimization context, the nonlinear component is the gradient of the objective function, while the dynamical system is the iterative method being analyzed. 
		
		The key idea with IQCs is to replace the nonlinearity ($\df$) by quadratic constraints that it must satisfy. 
		This is precisely the idea behind \emph{interpolation} (discussed in Section~\ref{Sec:interp}), which is a foundational concept in our methodology. 
		
		The difference between IQCs and our approach is that the interpolation conditions are necessary and sufficient to characterize $\df$ when $f\in\F$. However, the sector IQC and weighted off-by-one IQC used by~\citet{lessard2016} are a strict subset of the interpolation conditions; they are only sufficient for describing $\df$ when $f\in\F$.
		In particular, the IQC framework does not use any constraints on $\df$ that explicitly involve function values. This amounts to solving \sdp\ with additional constraints on $\lambda_{ij}$ and $\eta_{ij}$ such that all the function values cancel out in the SDP.
		
		\subsection{Numerical Comparisons}
		To illustrate our results, we consider the Gradient Method (GM), Heavy Ball Method (HBM), Fast Gradient Method (FGM), and Triple Momentum Method (TMM). Each of these methods can be parametrized as
		\begin{subequations}\label{Eq:algo_short}
		\begin{align}
		y_k&=x_k+\gamma\, (x_k-x_{k-1})\\
		x_{k+1}&=x_k+\beta\, (x_k-x_{k-1})-\alpha\, \df(y_k)
		\end{align}
		\end{subequations}
		for $k\geq 0$ where $x_{-1},x_0\in\real^d$ are the initial conditions, and the 
		parameters for each method are:
		\begin{equation*}
		 \renewcommand{\arraystretch}{1.5}
		\begin{array}{l|ccc}
		\text{Method} & \alpha & \beta & \gamma\\ \hline
		\text{GM} & \frac1L & 0 & 0\\
		\text{HBM} & \frac{4}{(\sqrt{L}+\sqrt{\mu})^2} & \Bigl(\frac{\sqrt{\kappa}-1}{\sqrt{\kappa}+1}\Bigr)^2 & 0\\
		\text{FGM} & \frac1L & \frac{\sqrt{\kappa}-1}{\sqrt{\kappa}+1} & \frac{\sqrt{\kappa}-1}{\sqrt{\kappa}+1} \vphantom{\Bigr)^2}\\
		\text{TMM} & \frac{2\sqrt{L}-\sqrt{\mu}}{L\sqrt{L}} & \frac{(\sqrt\kappa-1)^2}{\kappa+\sqrt\kappa} & \frac{(\sqrt\kappa-1)^2}{2\kappa+\sqrt\kappa-1} \vphantom{\Bigr)^2}
		\end{array}
		\end{equation*}
		We use~\sdp\ to find corresponding Lyapunov functions. The corresponding convergence rates are provided in Figure~\ref{fig:GM_HBM_FGM_TMM}; the results match those obtained using IQCs~\cite{lessard2016} for GM, HBM, and FGM, and those for TMM provided in~\cite{vanscoy2018}. For more complicated cases, the performance estimation toolbox PESTO~\cite{pesto2017} can be used to perform numerical validations.
		
		For illustrative purposes, we present results obtained using a restricted class of Lyapunov functions. We fixed $\lambda_{ij}=0$ in~\sdp\ and plotted the best achievable $\rho$ in Figure~\ref{fig:FGM_TMM}.
		 We observe that this restricted class is not sufficient to recover the rates obtained in Figure~\ref{fig:GM_HBM_FGM_TMM}.
		
		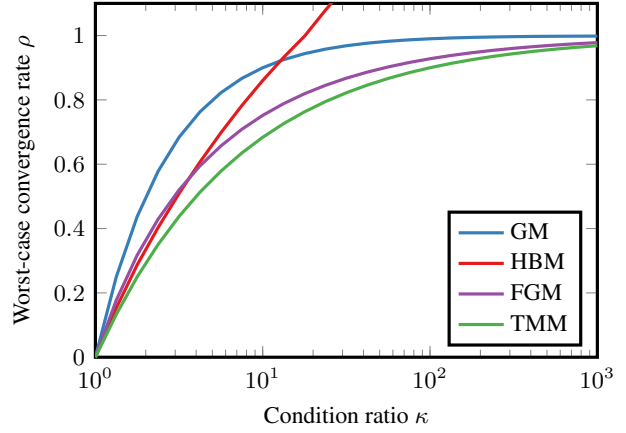
\begin{figure}[!ht]		
			\begin{center}
				\begin{tikzpicture}
				  \begin{semilogxaxis}[plotOptions,xmin=1e0,xmax=1e3,ymin=0,ymax=1.1,ytick={0,0.2,0.4,0.6,0.8,1},y post scale=0.85]
				    \addplot [colorGM]  table [x index=0,y index=1,header=false] {data/data.dat}; \addlegendentry{GM};
				    \addplot [colorHBM] table [x index=0,y index=2,header=false] {data/data.dat}; \addlegendentry{HBM};
				    \addplot [colorFGM] table [x index=0,y index=3,header=false] {data/data.dat}; \addlegendentry{FGM};
				    \addplot [colorTMM] table [x index=0,y index=4,header=false] {data/data.dat}; \addlegendentry{TMM};
				  \end{semilogxaxis}
				\end{tikzpicture}
				\caption{\label{fig:GM_HBM_FGM_TMM} Worst-case linear convergence rates from \sdp.}
			\end{center}
		\end{figure}
		
		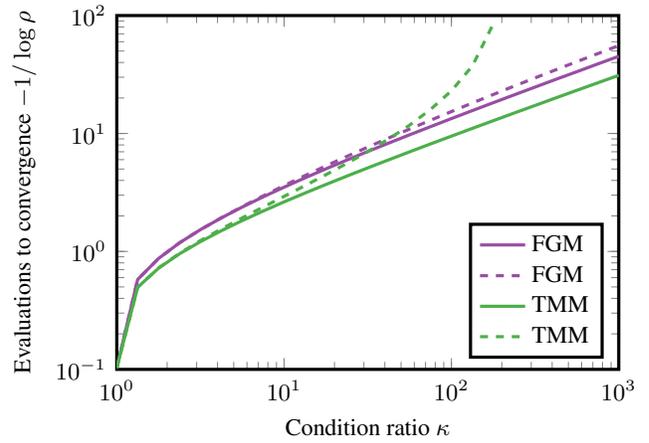
\begin{figure}[!ht]		
			\begin{center}
				\begin{tikzpicture}
				  \begin{loglogaxis}[MyStyle2,xmin=1e0,xmax=1e3,ymin=1e-1,ymax=1e2,y post scale=0.85,]
				    \addplot [colorFGM] table [x index=0,y index=1,header=false] {data/data_PositiveDefP.dat}; \addlegendentry{FGM};
				    \addplot [colorFGM,dashed] table [x index=0,y index=2,header=false] {data/data_PositiveDefP.dat}; \addlegendentry{FGM};
				    \addplot [colorTMM] table [x index=0,y index=3,header=false] {data/data_PositiveDefP.dat}; \addlegendentry{TMM};
				    \addplot [colorTMM,dashed] table [x index=0,y index=4,header=false] {data/data_PositiveDefP.dat}; \addlegendentry{TMM};
				  \end{loglogaxis}
				\end{tikzpicture}
				\caption{\label{fig:FGM_TMM} Order of magnitude of the worst-case number of iterations, which is $\mathcal{O}(-1/\log\rho)$, to solve problem~\eqref{Eq:prob}. The bounds are obtained by searching for Lyapunov functions of the form~\eqref{Eq:lyap} in two cases: (i) $(P,p)$ found using \sdp\ (solid), and (ii) restricting $P\succ 0$ and $p>0$ (dashed).}
			\end{center}
		\end{figure}
		
		\section{Proof of Theorem~\ref{Thm:main}}\label{Sec:proof}
		
		\subsection{Sampled Smooth Strongly Convex Functions}\label{Sec:interp}
		To prove Theorem~\ref{Thm:main}, we first need a result on smooth strongly convex functions that are sampled at discrete points. Indeed, inequalities~\eqref{Eq:smooth} and~\eqref{Eq:strongly_convex} completely characterize functions that are smooth and strongly convex. However, these inequalities are defined on an infinite set of points, and it was shown in Section 2.2 of~\cite{taylor2017smooth} that using them to prove convergence may introduce conservatism. Therefore, in order to completely characterize points which are sampled from smooth strongly convex functions, we need the concept of \emph{interpolation}.
		
		The following theorem is borrowed from~\cite{taylor2017smooth} and forms the basic building block for our analysis.
		
		\begin{thm}[$\F$-interpolation]\label{Thm:interp}
			Let $\I$ be an index set, and consider the set of triples $S = \{(y_i,g_i,f_i)\}_{i\in\I}$ where $y_i,g_i\in\real^d$ and $f_i\in\real$ for all $i\in\I$. There exists a function\footnote{In other words, we say that the set $S$ is \emph{$\F$-interpolable}.} $f\in\F$ such that $f(y_i)=f_i$ and $\df(y_i)=g_i$ for all $i\in\I$ if and only if $\phi_{ij}\geq 0$ for all $i,j\in\I$ where
			\begin{equation}\label{Eq:phi}
			\phi_{ij} \defeq (L-\mu) (f_i-f_j) + \bmat{y_i\\y_j\\g_i\\g_j}^\tp\!\!\!(M\kron I_d) \bmat{y_i\\y_j\\g_i\\g_j}
			\end{equation}
			with $M\in\mathbb{S}^4$ defined in~\eqref{Eq:M}.
		\end{thm}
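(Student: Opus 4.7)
The plan is to prove the two directions of the equivalence separately, following the standard strategy used for interpolation results of function classes.

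\emph{Necessity.} Given any $f\in\F$ that interpolates the data, the starting point is the classical two-point inequality characterizing $\F$: for any $y_i,y_j\in\real^d$,
\begin{equation*}
f(y_i)-f(y_j)-\nabla f(y_j)^\tp(y_i-y_j) \geq \tfrac{1}{2(L-\mu)}\bigl(\|\nabla f(y_i)-\nabla f(y_j)\|^2 + \mu L\|y_i-y_j\|^2 - 2\mu\langle \nabla f(y_i)-\nabla f(y_j),\,y_i-y_j\rangle\bigr).
\end{equation*}
Substituting the data $(y_i,g_i,f_i)$, clearing the denominator $2(L-\mu)$, and rearranging terms produces a nonnegative quadratic expression which, by a direct check of the coefficient pattern against the matrix $M$ in~\eqref{Eq:M}, coincides with $\phi_{ij}$ in~\eqref{Eq:phi}. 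This direction is essentially mechanical.

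\emph{Sufficiency.} Suppose $\phi_{ij}\geq 0$ for every $i,j\in\I$. I must construct an interpolating $f\in\F$ explicitly from the data. The approach is a two-step reduction. First, write $f(y)=h(y)+\tfrac{\mu}{2}\|y\|^2$, which belongs to $\F$ if and only if $h$ is convex and $(L-\mu)$-smooth; setting $\tilde f_i := f_i-\tfrac{\mu}{2}\|y_i\|^2$ and $\tilde g_i := g_i-\mu y_i$ reduces the problem to building such an $h$ with $h(y_i)=\tilde f_i$ and $\nabla h(y_i)=\tilde g_i$. Second, I pass to the Fenchel conjugate: $h$ is convex and $(L-\mu)$-smooth if and only if $h^*$ is $\tfrac{1}{L-\mu}$-strongly convex, and the data transfers to $h^*(\tilde g_i)=\tilde g_i^\tp y_i-\tilde f_i$ with $\nabla h^*(\tilde g_i)=y_i$. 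Such a dual interpolant can be written down explicitly as the pointwise maximum of parabolic lower bounds,
\begin{equation*}
h^*(g) := \max_{i\in\I}\Bigl\{(\tilde g_i^\tp y_i-\tilde f_i) + y_i^\tp(g-\tilde g_i) + \tfrac{1}{2(L-\mu)}\|g-\tilde g_i\|^2\Bigr\},
\end{equation*}
which is manifestly $\tfrac{1}{L-\mu}$-strongly convex. Taking $h:=(h^*)^*$ produces the desired convex $(L-\mu)$-smooth function, and $f := h + \tfrac{\mu}{2}\|\cdot\|^2$ is the sought member of $\F$.

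\emph{Main obstacle.} The heart of the argument is verifying that the candidate $h^*$ actually interpolates the prescribed dual data, i.e.\ that at $g=\tilde g_i$ the $i$-th term in the max dominates every other term (so that $h^*(\tilde g_i)$ equals the desired value and the subgradient is $y_i$). Writing out the inequality ``($i$-th term) $\geq$ ($j$-th term) at $g=\tilde g_i$'' and rearranging reproduces, after the two reductions above, exactly the assumption $\phi_{ij}\geq 0$. Thus the hypothesis is used once and its entire role is this algebraic translation: the nontrivial content of the theorem is the equivalence between $\phi_{ij}\geq 0$ and the strong-convexity interpolation condition on the conjugate $h^*$.
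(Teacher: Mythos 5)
The paper does not prove this theorem: it is imported verbatim from \citet{taylor2017smooth} (their smooth strongly convex interpolation result), so there is no in-paper argument to compare against. Your sketch is essentially the proof given in that reference --- necessity from the refined two-point inequality characterizing $\F$, sufficiency via the strong-convexity shift, Fenchel conjugation, and the max-of-quadratics dual interpolant, with $\phi_{ij}\geq 0$ emerging as exactly the condition that the $i$-th piece dominates at $g=\tilde g_i$ --- and it is correct in outline; the only step worth making explicit is that weak dominance of the $i$-th piece only gives $y_i\in\partial h^*(\tilde g_i)$, which suffices because the $\tfrac{1}{L-\mu}$-strong convexity of $h^*$ guarantees that $h=(h^*)^*$ is differentiable, upgrading the subgradient relation to $\nabla h(y_i)=\tilde g_i$.
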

		
		\subsection{Positive Definite Quadratics From Sampling}
		Recall from Section~\ref{Sec:lyap?} that the Lyapunov function must satisfy two conditions: (i) $\V$ must be positive definite (i.e, nonnegative, zero at the fixed-point, and radially unbounded), and (ii) $\Delta\V\defeq\V_{k+1}-\rho^2\,\V_k$ must be negative semidefinite (i.e., $\V$ must satisfy the decrease condition). To prove both (i) and (ii), we use the following theorem, which provides necessary and sufficient conditions for a quadratic form to be positive (semi-)definite when the iterates are generated by method~\eqref{Eq:method} applied to $f\in\F$.
		
		\begin{thm}[Sampled positive definite quadratics]\label{Thm:PD_quad}
			Consider applying the first-order iterative fixed-step method~\eqref{Eq:method} of degree $N$ to a smooth strongly convex function ${f\in\F(\real^d)}$ for $K$ iterations. Suppose the step-sizes $\alpha$, $\beta_j$, and $\gamma_j$ are such that $\alpha\neq 0$, $\gamma_0\neq 0$, and
			\[
			\sum_{j=0}^N \beta_j = \sum_{j=0}^N \gamma_j = 1.
			\]
			Define the vectors $\x\in\real^{(N+1)d}$, $\g\in\real^{(K+1)d}$, and ${\f\in\real^{K+1}}$ as
			\begin{subequations}\label{Eq:xgf}
				\begin{align}
				\x &\defeq \bmat{(x_{-N}-x_\star)^\tp & \ldots & (x_0-x_\star)^\tp}^\tp \\
				\g &\defeq \bmat{(g_0-g_\star)^\tp & \ldots & (g_K-g_\star)^\tp}^\tp \\
				\f &\defeq \bmat{f_0-f_\star & \ldots & f_K-f_\star}^\tp
				\end{align}
			\end{subequations}
			and denote the triple $\xi\defeq (\x,\g,\f)$. Define $m_{ij}^{(K)}\in\real^{K+1}$ and $M_{ij}^{(K)}\in\mathbb{S}^{N+K+2}$ such that
			\begin{align}\label{Eq:phi_global}
			\phi_{ij}(\xi) &= \bmat{\x \\ \g}^\tp\!\!\! (M_{ij}^{(K)}\kron I_d) \bmat{\x \\ \g} + \bigl(m_{ij}^{(K)}\bigr)^\tp \f
			\end{align}
			for $i,j\in\I_K\defeq\{0,\ldots,K,\star\}$ where $\phi_{ij}$ is defined in~\eqref{Eq:phi}. Consider the quadratic function
			\begin{align*}
			\sigma(\xi) &= \bmat{\x \\ \g}^\tp\!\!\! (Q\kron I_d) \bmat{\x \\ \g} + q^\tp \f
			\end{align*}
			where $Q\in\mathbb{S}^{N+K+2}$ and $q\in\real^{N+1}$. Suppose the dimension $d$ satisfies\footnote{This requirement is only used for necessity.} $d\geq N+K+2$.
			
			Then $\sigma$ is positive semidefinite (i.e., nonnegative) if and only if there exists $\tau_{ij}\geq 0$ for $i,j\in\I_K$ such that
			\begin{subequations}
				\begin{align*}
				0 &\preceq Q - \sum_{i,j\in\I_K} \tau_{ij}\,M_{ij}^{(K)} \\
				0 &\leq    q - \sum_{i,j\in\I_K} \tau_{ij}\,m_{ij}^{(K)}.
				\end{align*}
			\end{subequations}
			Furthermore, $\sigma$ is positive definite if and only if there exists $\tau_{ij}\geq 0$ for $i,j\in\I_K$ such that
			\begin{subequations}\label{Eq:S2_strict}
				\begin{align}
				0 &\prec Q - \sum_{i,j\in\I_K} \tau_{ij}\,M_{ij}^{(K)}  \label{Eq:S2_P_strict}\\
				0 &<     q - \sum_{i,j\in\I_K} \tau_{ij}\,m_{ij}^{(K)}. \label{Eq:S2_p_strict}
				\end{align}
			\end{subequations}
		\end{thm}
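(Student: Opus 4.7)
The plan is to reformulate the problem via Gram matrices into an equivalent conic linear program over $\mathbb{S}^{N+K+2}_+\times\real_{\geq 0}^{K+1}$, and then invoke conic duality. Since every quadratic form in the statement has the Kronecker structure $(\cdot)\kron I_d$, it descends to a linear functional of the Gram matrix $G\in\mathbb{S}^{N+K+2}$ of the stacked vector $(x_{-N}-x_\star,\ldots,x_0-x_\star,g_0-g_\star,\ldots,g_K-g_\star)$; the dimension assumption $d\geq N+K+2$ is precisely what guarantees every $G\succeq 0$ is realizable as such a Gram matrix, so no rank constraint intervenes. In these variables, $\phi_{ij}(\xi)=\langle M_{ij}^{(K)},G\rangle+(m_{ij}^{(K)})^\top\f$ and $\sigma(\xi)=\langle Q,G\rangle+q^\top\f$, and by Theorem~\ref{Thm:interp} the ``valid'' set (those $\xi$ arising from some $f\in\F$) coincides with $\{(G,\f):G\succeq 0,\ \phi_{ij}(G,\f)\geq 0\ \forall i,j\in\I_K\}$.

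For sufficiency I would start from the algebraic identity
\[
\sigma(\xi)=\bigl\langle Q-\textstyle\sum_{ij}\tau_{ij}M_{ij}^{(K)},\,G\bigr\rangle+\bigl(q-\textstyle\sum_{ij}\tau_{ij}m_{ij}^{(K)}\bigr)^{\!\top}\f+\textstyle\sum_{ij}\tau_{ij}\,\phi_{ij}(\xi),
\]
and observe that on the valid set each of the three pieces is nonnegative: the first by $G\succeq 0$ and the assumed matrix inequality; the second by the assumed entrywise inequality together with $\f\geq 0$ (which follows from $\phi_{i\star}\geq 0$ after a brief calculation using $y_\star=g_\star=0$, yielding $(L-\mu)(f_i-f_\star)\geq\|g_i-\mu y_i\|^2+\mu(L-\mu)\|y_i\|^2\geq 0$); the third by $\tau_{ij}\geq 0$ combined with Theorem~\ref{Thm:interp}. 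The strict version is identical, noting that on the valid set $\xi\neq 0$ forces $G\neq 0$ (if $G=0$ then all $y_i,g_i$ vanish, so $\phi_{ij}=(L-\mu)(f_i-f_j)\geq 0$ for all $i,j$ forces $\f=0$).

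For necessity in the PSD case, consider the conic LP
\[
p^\star=\inf_{G,\f}\bigl\{\langle Q,G\rangle+q^\top\f\,:\,G\succeq 0,\ \f\geq 0,\ \phi_{ij}(G,\f)\geq 0\ \forall i,j\in\I_K\bigr\},
\]
which has value $\geq 0$ by hypothesis (adjoining the redundant constraint $\f\geq 0$ does not alter the infimum). Attaching multipliers $\tau_{ij}\geq 0$, $\Lambda\succeq 0$, and $\nu\geq 0$ to the respective constraints, the Lagrangian dual is ``maximize $0$'' subject to $Q-\sum\tau_{ij}M_{ij}^{(K)}=\Lambda$ and $q-\sum\tau_{ij}m_{ij}^{(K)}=\nu$, which collapses to exactly the claimed PSD and entrywise inequalities. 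Strong duality then produces the required $\tau_{ij}$. For the positive definite case I would use a homogeneity/compactness argument: the valid set is a cone under $(G,\f)\mapsto(\lambda G,\lambda\f)$, so positive definiteness together with compactness of the normalized slice $\{\mathrm{tr}(G)+\1^\top\f=1\}$ yields $\sigma(\xi)\geq\epsilon(\mathrm{tr}(G)+\1^\top\f)$ for some $\epsilon>0$; applying the PSD result to $\sigma-\epsilon(\mathrm{tr}(G)+\1^\top\f)$ promotes both inequalities to strict.

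The main obstacle is verifying strong duality of the primal conic LP, which demands a Slater point: a $(G_0,\f_0)$ with $G_0\succ 0$, $\f_0>0$, and $\phi_{ij}(G_0,\f_0)>0$ for all $i\neq j$ (the diagonal constraints $\phi_{ii}\equiv 0$ are tight but harmless and may be dropped). Producing such a point should be routine by choosing a sufficiently generic $f\in\F$ with all iterates distinct and perturbing slightly, but the combined SDP$+$LP cone requires care in the constraint-qualification bookkeeping.
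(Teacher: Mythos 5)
Your route is essentially the paper's: lift to the Gram matrix $\G=\B^\tp\B$ of the $N+K+2$ vectors, replace $f\in\F$ by the interpolation inequalities $\phi_{ij}\geq 0$ via Theorem~\ref{Thm:interp}, drop the rank constraint using $d\geq N+K+2$, and close necessity by strong duality of the resulting SDP. Your compactness argument on the normalized slice for the definite case is the same homogeneity device the paper implements by adding the constraint $\trace(\G)+\1^\tp\f=\varepsilon$ and reading the strict inequalities off the dual variable $\nu$; the sufficiency identity and the observation $\f\geq 0$ also match the paper's.

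The genuine gap is the Slater point, which you defer as ``routine.'' It is not: the entries of $\G$ are not free parameters, because $y_0,\dots,y_K$ and $x_1,\dots,x_{K}$ are determined from $(x_{-N},\dots,x_0)$ and $(g_0,\dots,g_K)$ by the recursion~\eqref{Eq:method}, so you cannot ``perturb slightly'' in Gram space; you must exhibit an $f\in\F$ and initial conditions for which the vectors $x_{-N}-x_\star,\dots,x_0-x_\star,g_0,\dots,g_K$ are linearly independent. This is precisely where the hypotheses $\alpha\neq 0$ and $\gamma_0\neq 0$---which your argument never invokes---are consumed. The paper constructs the point explicitly: take $f(x)=\tfrac12 x^\tp H x$ with $H$ tridiagonal and initial conditions $x_i=e_{N+1+i}$, and show by induction that each gradient $g_k$ introduces the fresh coordinate $e_{N+2+k}$ (using $\gamma_0\neq0$ so that $y_k$ contains the newest coordinate, $\alpha\neq0$ so that $x_{k+1}$ inherits it, and the tridiagonal structure to propagate it one index further), so that $\B$ is upper triangular with nonzero diagonal and $\G\succ0$. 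A genericity argument could be salvaged, but only after proving that full rank is attainable at all, which is the same induction. Two smaller points: you do not need $\f_0>0$ or $\phi_{ij}>0$ strictly---the refined Slater condition only requires strict feasibility of the non-affine constraint $\G\succeq 0$, which is all the paper arranges---and your derivation of $\f\geq0$ from $\phi_{i\star}\geq0$ degenerates when $\mu=L$, though $f_i\geq f_\star$ holds anyway since $x_\star$ minimizes $f$.
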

		
		\begin{proof}
			We prove the second statement that $\sigma$ is positive definite if and only if there exists $\tau_{ij}\geq 0$ for ${i,j\in\I_K}$ such that~\eqref{Eq:S2_strict} holds; the proof of the first statement is similar.
			
			Here we prove that the conditions are sufficient for $\sigma$ to be positive definite; necessity is more involved and can be found in the supplementary material.
			
			\textbf{(Sufficiency).}
			Suppose there exists $\tau_{ij}\geq 0$ for $i,j\in\I_K$ such that~\eqref{Eq:S2_strict} holds. Clearly, we have $\sigma(0)=0$. Now assume that $\xi\neq 0$. Sum the following two inequalities: (i) take the Kronecker product of~\eqref{Eq:S2_P_strict} with $I_d$ and multiply the result on the left and right by $\bmat{\x^\tp & \g^\tp}$ and its transpose, respectively, and (ii) multiply the tranpose of~\eqref{Eq:S2_p_strict} on the right by $\f$. Doing so gives the inequality
			\begin{align}\label{Eq:PD_quad_sufficiency}
			0 < \sigma(\xi) - \sum_{i,j\in\I_K} \tau_{ij}\,\phi_{ij}(\xi)
			\end{align}
			which is strict due to the strict inequalities in~\eqref{Eq:S2_strict} and since $\xi\neq 0$. Since $f\in\F$, we have $\phi_{ij}\geq 0$ from Thm.~\ref{Thm:interp}, so
			\begin{align*}
			0 \leq \sum_{i,j\in\I_K} \tau_{ij}\,\phi_{ij}(\xi) < \sigma(\xi).
			\end{align*}
			Then $\sigma(\xi)\geq 0$, and $\sigma(\xi)=0$ if and only if $\xi=0$. Finally, note that the strict inequalities in~\eqref{Eq:S2_strict} imply that the right side of~\eqref{Eq:PD_quad_sufficiency} grows arbitrarily large as $\|\xi\|\to\infty$, so $\sigma$ is radially unbounded. Thus, $\sigma$ is positive definite.
		\end{proof}
		
		\begin{rem}
			Theorem~\ref{Thm:PD_quad} can be seen as a specialized application of the S-procedure~\cite{LMI1994,megretski1993} where the points in $\xi$ are generated by method~\eqref{Eq:method}, and the positive semidefinite quadratic terms come from the interpolation conditions in Theorem~\ref{Thm:interp}. While the S-procedure is known to be lossy in certain cases (i.e., the conditions are sufficient but not necessary for $\sigma$ to be positive (semi)definite), Theorem~\ref{Thm:PD_quad} shows that it is in fact \emph{lossless} under the large-scale assumption $d\geq N+K+2$.
		\end{rem}
		
		We now apply Theorem~\ref{Thm:PD_quad} to obtain necessary and sufficient conditions for both $\V$ to be positive definite and $\Delta\V\defeq\V(\xi_{k+1})-\rho^2\,\V(\xi_k)$ to be negative semidefinite. To that end, note that the basis vectors $\bar{x}_k^{(K)}$, $\bar{g}_k^{(K)}$, and $\bar{f}_k^{(K)}$ in~\eqref{Eq:xgf_vectors} are such that
		\begin{alignat*}{3}
		& x_k-x_\star &&= (\bar{x}_k^{(K)}\kron I_d) \bmat{\x \\ \g} && \for k\in\{-N,\ldots,K\} \\
		& g_k-g_\star &&= (\bar{g}_k^{(K)}\kron I_d) \bmat{\x \\ \g} && \for k\in\{0,\ldots,K\} \\
		& f_k-f_\star &&= \bar{f}_k^{(K)} \f                         && \for k\in\{0,\ldots,K\} \\
		& y_k-x_\star &&= (\bar{y}_k^{(K)}\kron I_d) \bmat{\x \\ \g} && \for k\in\{0,\ldots,K\}. 
		\end{alignat*}
		where $\x$, $\g$, and $\f$ are defined in~\eqref{Eq:xgf} and we used the iterations in~\eqref{Eq:method_vectors}. We can then sum the following: (i) take the Kronecker product of $M_{ij}^{(K)}$ in~\eqref{Eq:Mij} with $I_d$ and multiply the result on the left and right by $\bmat{\x^\tp & \g^\tp}$ and its transpose, respectively, and (ii) multiply the transpose of $m_{ij}^{(K)}$ on the right by $\f$. Adding these two quantities gives~\eqref{Eq:phi_global}. Similarly, the Lyapunov function in~\eqref{Eq:lyap} is given by
		\begin{align}\label{Eq:V_global}
		\V(\xi_k) &= \bmat{\x \\ \g}^\tp\!\!\! (V_k^{(K)}\kron I_d) \bmat{\x \\ \g} + \bigl(v_k^{(K)}\bigr)^\tp \f
		\end{align}
		and the decrease in the Lyapunov function is given by
		\begin{align}\label{Eq:dV_global}
		\Delta V(\xi_k) &= \bmat{\x \\ \g}^\tp\!\!\! (\Delta V_k^{(K)}\kron I_d) \bmat{\x \\ \g} + \bigl(\Delta v_k^{(K)}\bigr)^\tp \f
		\end{align}
		using the definitions in~\eqref{Eq:Vk} and~\eqref{Eq:dV}. 	This leads to the following results.
		
		\begin{cor}[$\V$ positive definite]\label{Cor:V_PD}
			$\V$ in~\eqref{Eq:lyap} is positive definite for all values of $d\in\nat$ if and only if there exists $\lambda_{ij}\geq 0$ for $i,j\in\I_N$ such that
			\begin{align*}
			0 &\prec V_N^{(N)} - \sum_{i,j\in\I_N} \lambda_{ij}\,M_{ij}^{(N)} \\
			0 &<     v_N^{(N)} - \sum_{i,j\in\I_N} \lambda_{ij}\,m_{ij}^{(N)}
			\end{align*}
			where $M_{ij}^{(N)}$ and $m_{ij}^{(N)}$ defined in~\eqref{Eq:Mij}.
		\end{cor}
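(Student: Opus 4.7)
The plan is to realize that this corollary is essentially a direct instantiation of the positive definiteness statement in Theorem~\ref{Thm:PD_quad}, applied at the specific iteration $k = N$ with horizon $K = N$. The work consists mostly of matching variables and handling the quantifier over $d$ correctly.

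First I would invoke the global representation~\eqref{Eq:V_global} specialized to $k = K = N$. This says that
\[
\V(\xi_N) = \bmat{\x \\ \g}^\tp (V_N^{(N)} \kron I_d) \bmat{\x \\ \g} + \bigl(v_N^{(N)}\bigr)^\tp \f,
\]
where $(\x,\g,\f)$ are the concatenated state vectors in~\eqref{Eq:xgf} with $K = N$. This is exactly the form of the quadratic function $\sigma(\xi)$ from Theorem~\ref{Thm:PD_quad}, with $Q = V_N^{(N)}$ and $q = v_N^{(N)}$. Hence the three positive definiteness requirements on $\V$ from Section~\ref{Sec:lyap?} (nonnegative, zero exactly at the fixed point, and radially unbounded) reduce to $\sigma$ being positive definite in the sense of Theorem~\ref{Thm:PD_quad}.

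Next I would apply Theorem~\ref{Thm:PD_quad} in both directions. For sufficiency, if $\lambda_{ij} \geq 0$ exist satisfying the two displayed inequalities, then (taking $\tau_{ij} = \lambda_{ij}$) the sufficiency half of Theorem~\ref{Thm:PD_quad} gives $\V(\xi_N) > 0$ whenever $\xi \neq 0$, with radial unboundedness inherited from the strict inequalities. The sufficiency argument in the proof of Theorem~\ref{Thm:PD_quad} does not use the dimension bound $d \geq N + K + 2$, so this holds for every $d \in \nat$. For necessity, assume $\V$ is positive definite for all $d \in \nat$. In particular it is positive definite for some $d \geq 2N + 2 = N + K + 2$, which is precisely the regime in which the necessity half of Theorem~\ref{Thm:PD_quad} applies, and produces multipliers $\lambda_{ij} = \tau_{ij} \geq 0$ (indexed by $i,j \in \I_N$) satisfying exactly the two strict inequalities in the statement. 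Since those inequalities are independent of $d$, the same multipliers work uniformly.

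There is no real obstacle beyond careful bookkeeping. The one subtle point is the quantifier over $d$: necessity of the multipliers requires access to a sufficiently large dimension, which is why the hypothesis ``for all $d \in \nat$'' (rather than for a single small $d$) is essential; sufficiency goes through for every $d$ for free. So the proof will be short, and I would simply write: ``This is a direct application of Theorem~\ref{Thm:PD_quad} with $K = N$, $Q = V_N^{(N)}$, $q = v_N^{(N)}$, together with the identification~\eqref{Eq:V_global}; the quantifier ``for all $d \in \nat$'' supplies a dimension large enough to invoke the necessity direction of Theorem~\ref{Thm:PD_quad}.''
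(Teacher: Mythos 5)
Your proposal is correct and matches the paper's own proof, which is exactly a one-line application of Theorem~\ref{Thm:PD_quad} with $K=N$ to the representation~\eqref{Eq:V_global} of $\V$. Your additional remarks on the quantifier over $d$ (sufficiency for every $d$, necessity via a single $d\geq N+K+2$ supplied by the ``for all $d\in\nat$'' hypothesis) are a correct elaboration of the same argument rather than a different route.
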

		
		\begin{proof}
			The result follows from applying Theorem~\ref{Thm:PD_quad} with $K=N$ to show that the quadratic function $\V$ in~\eqref{Eq:V_global} is positive definite.
		\end{proof}
		
		\begin{cor}[$\Delta\V$ negative semidefinite]\label{Cor:dV_NSD}
			Consider $\V$ in~\eqref{Eq:lyap} and define ${\Delta\V\defeq\V(\xi_{k+1})-\rho^2\,\V(\xi_k)}$. Then $\Delta\V$ is negative semidefinite for all values of $d\in\mathbb{N}$ if and only if there exists $\eta_{ij}\geq 0$ for $i,j\in\I_{N+1}$ such that
			\begin{align*}
			0 &\succeq \Delta V_N^{(N+1)} + \!\!\sum_{i,j\in\I_{N+1}}\!\! \eta_{ij}\,M_{ij}^{(N+1)} \\
			0 &\geq    \Delta v_N^{(N+1)} + \!\!\sum_{i,j\in\I_{N+1}}\!\! \eta_{ij}\,m_{ij}^{(N+1)}
			\end{align*}
			where $\Delta V_N^{(N+1)}$ and $\Delta v_N^{(N+1)}$ are defined in~\eqref{Eq:dV}.
		\end{cor}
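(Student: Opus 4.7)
The plan is to mirror the proof of Corollary~\ref{Cor:V_PD} almost verbatim, but apply Theorem~\ref{Thm:PD_quad} to $-\Delta\V$ (with the positive \emph{semi}definite statement) and use $K=N+1$ instead of $K=N$. First, by time-invariance of method~\eqref{Eq:method}, we may evaluate the decrease at $k=N$ without loss of generality; the state $\xi_{N+1}$ is determined by $x_0,\ldots,x_N$ together with the single extra gradient $g_N$, so the natural ``ambient'' basis spans iterates indexed over $\I_{N+1}$. Using~\eqref{Eq:method_vectors} to express $\bar{x}_{N+1}^{(N+1)}$ in terms of the $\bar{x}_{k}^{(N+1)}$ and $\bar{g}_N^{(N+1)}$, the identity~\eqref{Eq:dV_global} writes $\Delta\V(\xi_N)$ as a quadratic in $\bmat{\x\\\g}$ plus a linear term in $\f$, with coefficient matrix $\Delta V_N^{(N+1)}$ and coefficient vector $\Delta v_N^{(N+1)}$ exactly as defined in~\eqref{Eq:dV}.

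Second, I would apply Theorem~\ref{Thm:PD_quad} (first statement, positive semidefinite version) to the quadratic $\sigma\defeq-\Delta\V$, taking $Q=-\Delta V_N^{(N+1)}$ and $q=-\Delta v_N^{(N+1)}$. This yields: $-\Delta\V\geq 0$ along all trajectories generated by~\eqref{Eq:method} on some $f\in\F(\real^d)$ if and only if there exist $\tau_{ij}\geq 0$, $i,j\in\I_{N+1}$, with
\begin{align*}
0 &\preceq -\Delta V_N^{(N+1)} - \sum_{i,j\in\I_{N+1}} \tau_{ij}\,M_{ij}^{(N+1)}, \\
0 &\leq    -\Delta v_N^{(N+1)} - \sum_{i,j\in\I_{N+1}} \tau_{ij}\,m_{ij}^{(N+1)}.
\end{align*}
Setting $\eta_{ij}\defeq\tau_{ij}$ and moving the multiplier sums to the right-hand side gives exactly the two inequalities stated in the corollary. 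Note that the non-strict inequalities are appropriate here because $\Delta\V$ is only required to be negative \emph{semi}definite, matching the first (non-strict) conclusion of Theorem~\ref{Thm:PD_quad} rather than the second (strict) one used in Corollary~\ref{Cor:V_PD}.

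Third, I would reconcile the quantifier ``for all $d\in\nat$'' with the dimension hypothesis $d\geq N+K+2=2N+3$ in Theorem~\ref{Thm:PD_quad}. The ``if'' direction holds for every $d$ by the sufficiency argument of Theorem~\ref{Thm:PD_quad}, which never uses the dimension assumption. For the ``only if'' direction, requiring $\Delta\V$ to be negative semidefinite for all $d\in\nat$ in particular forces it at some $d\geq 2N+3$, at which point necessity of Theorem~\ref{Thm:PD_quad} produces the required $\eta_{ij}$; this is the same bridge used implicitly in Corollary~\ref{Cor:V_PD}.

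The only genuine friction I anticipate is bookkeeping around the shift of indices and the fact that $\Delta\V$ straddles two iterations: one must confirm that the ``ambient'' triple $(\x,\g,\f)$ used to encode both $\V(\xi_N)$ and $\V(\xi_{N+1})$ is consistent, which is precisely why $K=N+1$ is the right choice and why~\eqref{Eq:method_vectors} needs to be invoked to eliminate $\bar{x}_{N+1}^{(N+1)}$. Once that substitution is in place, the argument is a direct transcription of the proof of Corollary~\ref{Cor:V_PD}, with strict inequalities relaxed to non-strict and with $\V$ replaced by $-\Delta\V$.
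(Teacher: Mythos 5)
Your proof is correct and takes essentially the same route as the paper, whose entire argument is the one-line observation that the corollary follows from applying Theorem~\ref{Thm:PD_quad} with $K=N+1$ to the quadratic $\Delta\V$ expressed in the ambient coordinates~\eqref{Eq:dV_global}. Your sign bookkeeping (invoking the non-strict, positive-semidefinite statement of Theorem~\ref{Thm:PD_quad} for $-\Delta\V$ with $Q=-\Delta V_N^{(N+1)}$, $q=-\Delta v_N^{(N+1)}$) and your handling of the ``for all $d$'' quantifier are exactly what the paper leaves implicit.
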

		
		\begin{proof}
			The result follows from applying Theorem~\ref{Thm:PD_quad} with $K=N+1$ to show that the quadratic function $\Delta\V$ in~\eqref{Eq:dV_global} is negative semidefinite.
		\end{proof}
		
		Theorem~\ref{Thm:main} then follows from combining the results in Corollaries~\ref{Cor:V_PD} and~\ref{Cor:dV_NSD}. In particular, the inequalities in each corollary correspond to the constraints in the semidefinite program \sdp. If the problem is feasible, then $\V$ is positive definite and $\Delta\V$ is negative semidefinite at iteration $N$. Since this holds for any initial condition, we can apply the result for each $k\geq N$ to show that $\V$ is a valid Lyapunov function. On the other hand, if the problem is infeasible, then there exists no quadratic function of the form~\eqref{Eq:lyap} such that $\V$ is positive definite and $\Delta\V$ is negative semidefinite, so no valid quadratic Lyapunov function with state $\xi_k$ exists for the given rate $\rho$. This completes the proof of Thm.~\ref{Thm:main}. \qedhere

		\section{Extensions}\label{Sec:extensions}
		Our main result in Theorem~\ref{Thm:main} applies to methods of the form~\eqref{Eq:method} with fixed step-sizes applied to smooth strongly convex functions. Our framework, however, can be extended to many other scenarios, with or without tightness.		
		
		We now proceed with some examples of how our procedure of searching for Lyapunov functions can serve as a basis for the analysis of many \emph{exotic} algorithms. We provide two such examples: (i) the analysis of variants of GM and HBM involving subspace searches, and (ii) the analysis of a fast gradient scheme with scheduled restarts.
		
		\subsection{Exact Line Searches}
		In this section, we search for quadratic Lyapunov functions when it is possible to perform an exact line search. We illustrate the procedure on steepest descent
		\begin{equation*}
		\begin{aligned}
		\alpha &= \argmin_\alpha f(x_k-\alpha\, \df(x_k))\\
		x_{k+1}&=x_k-\alpha\, \df(x_k)
		\end{aligned}
		\end{equation*}
		and on a variant of HBM: 
		\begin{equation*}
		\begin{aligned}
		(\alpha,\beta)&= \argmin_{\alpha,\beta} f(x_k+\beta\, (x_k-x_{k-1})-\alpha\, \df(x_k))\\
		x_{k+1}&=x_k+\beta\, (x_k-x_{k-1})-\alpha\, \df(x_k)
		\end{aligned}
		\end{equation*}
		The detailed analyses can be found in the supplementary material, whereas the results are presented on Figure~\ref{fig:ELS_GM_HBM}.
		
		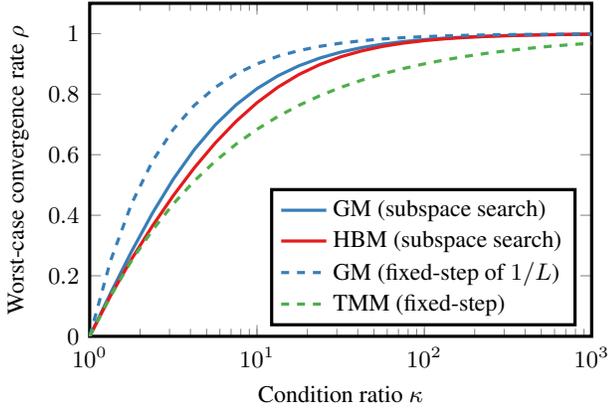
\begin{figure}[!ht]		
			\begin{center}
				\begin{tikzpicture}
				  \begin{semilogxaxis}[plotOptions,xmin=1e0,xmax=1e3,ymin=0,ymax=1.1,ytick={0,0.2,0.4,0.6,0.8,1},]
				    \addplot [colorGM] table [x index=0,y index=1,header=false] {data/data_SubspaceSearches.dat}; 
				    \addlegendentry{GM (subspace search)};
				    \addplot [colorHBM] table [x index=0,y index=2,header=false] {data/data_SubspaceSearches.dat}; 
				    \addlegendentry{HBM (subspace search)};
				    \addplot [colorGM,dashed] table [x index=0,y index=1,header=false] {data/data.dat};
				    \addlegendentry{GM (fixed-step of $1/L$)};
				    \addplot [colorTMM,dashed] table [x index=0,y index=4,header=false] {data/data.dat}; 
				    \addlegendentry{TMM (fixed-step)};
				  \end{semilogxaxis}
				\end{tikzpicture}
				\caption{\label{fig:ELS_GM_HBM} Convergence rates of GM and HBM with subspace searches. Note that the Gradient Method with exact line search matches the worst-case rate $\frac{\kappa-1}{\kappa+1}$ from~\cite{deKlerk2017}. For comparison, the rates of the Gradient Method with step-size $1/L$ and the Triple Momentum Method are also shown.}
			\end{center}
		\end{figure}

		\subsection{Scheduled Restarts}
		In this section, we apply the methodology to estimate the convergence rate of FGM with scheduled restarts; motivations for this kind of techniques can be found in e.g.,~\cite{o2015adaptive}. We present numerical guarantees obtained when using a version of FGM tailored for smooth convex minimization, which is restarted every $N$ iterations. This setting goes slightly beyond the fixed-step model presented in~\eqref{Eq:method}, as the step-size rules depend on the iteration counter.
		
		Define $\beta_0\defeq1$ and $\beta_{i+1}\defeq \frac{1+\sqrt{4\beta_i^2+1}}{2}$; we use the following iterative procedure 
		\begin{equation}\label{Eq:restarted_method}
			\begin{aligned}
			y^0_k,z^0_k &\leftarrow y^{N}_{k-1}\\
			z^{i+1}_k   &= y^i_k - \frac1L\df(y_k^i)\\
			y^{i+1}_k   &= z^{i+1}_k+\frac{\beta_i-1}{\beta_{i+1}} (z^{i+1}_k-z^{i}_k)
			\end{aligned}
			\end{equation}
			which does $N$ steps of the standard fast gradient method~\cite{nesterov1983method} before restarting. We study the convergence of this scheme using quadratic Lyapunov functions with states $(y_k^N-y_\star,\df(y_k^N),f(y_k^N)-f(y_\star))$. The derivations of the SDP for verifying $\V_{k+1}\leq \rho^{2N}\V_k$ (where~$\rho^{N}$ is the convergence rate of the inner loop) is similar to that of~\sdp\ (details in supplementary material).  Numerical results are provided in Figure~\ref{fig:restart}.
		
		\begin{figure}[!ht]		
			\begin{center}
				\begin{tikzpicture}
				  \begin{loglogaxis}[MyStyle2,xmin=1,xmax=1e3,ymin=2,ymax=1e3,legend pos=north west]
				    \addplot [colorFGM!40!white] table [x index=0,y index=4,header=false] {data/data_restarted_FGM.dat};
					\addlegendentry{$N=1$};
				    \addplot [colorFGM!60!white] table [x index=0,y index=3,header=false] {data/data_restarted_FGM.dat};
				    \addlegendentry{$N=5$};
				    \addplot [colorFGM!80!white] table [x index=0,y index=2,header=false] {data/data_restarted_FGM.dat};
				    \addlegendentry{$N=10$};
				    \addplot [colorFGM!100!white] table [x index=0,y index=1,header=false] {data/data_restarted_FGM.dat};
				    \addlegendentry{$N=20$};
				    \addplot [black,dashed] table [x index=0,y index=5,header=false] {data/data_restarted_FGM.dat};
				    \addplot [blue,dashed] table [x index=0,y index=1,header=false] {data/data_restarted_FGM_Opt_N.dat};
				  \end{loglogaxis}
				\end{tikzpicture}
				\caption{\label{fig:restart} Worst-case number of gradient evaluations to convergence $\mathcal{O}(-1/\log\rho)$ for different restart schedules $N$ (purple) along with the optimal restart schedule $N_\star = \arg\min_N\,\rho(N)$ (dashed blue). For comparison, we also plot the upper bound $\rho(N_\star)\leq\mathrm{exp}\left({\frac{-1}{e\sqrt{8\kappa}}}\right)$ (dashed black) from~\cite{o2015adaptive}. Results are not shown for small $\kappa$ due to numerical limitations of the SDP solvers.}
			\end{center}
		\end{figure}
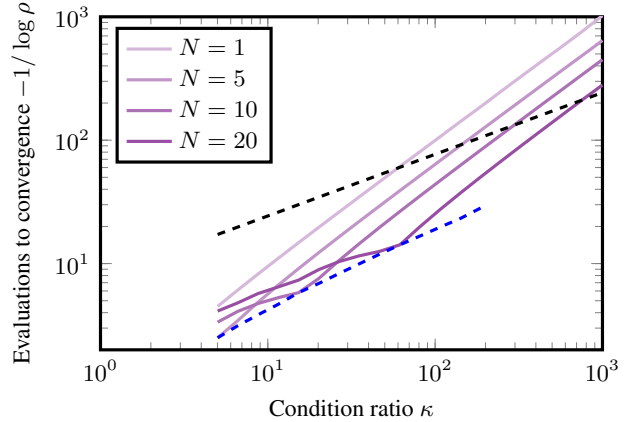

		\section{Conclusion}\label{Sec:conclusion}
		In this work, we studied first-order iterative fixed-step methods applied to smooth strongly convex functions. We presented a semidefinite formulation whose feasibility is both necessary and sufficient for the existence of a quadratic Lyapunov function.
		For smooth strongly convex minimization, restriction to quadratic Lyapunov functions is natural, as nonlinearities are exactly characterized by quadratic interpolation constraints. Using other tools such as sum-of-squares Lyapunov functions (see e.g.,~\citet{parrilo2000structured}) could be beneficial for more general algorithm and problem classes.

		This methodology unifies two previous approaches to worst-case analyses: performance estimation due to~\citet{drori2014} and integral quadratic constraints due to~\citet{lessard2016}. Moreover, this approach admits a large number of potential extensions, both in terms of classes of optimization problems and types of algorithms that can be analyzed (see e.g., extensions for performance estimation~\cite{taylor2017exact}). In particular, Lyapunov functions can be used to study sublinear convergence rates (see e.g.,~\citet{hu2017}), switched systems~\cite{lin2009} (e.g., for adaptive methods), noisy methods (see e.g.,~\citet{cyrus2018}), or continuous-time settings such as in~\cite{su2016differential}.
		
		\paragraph*{Code}
		The code used to implement \sdp\ and generate the figures in this paper is available at \url{https://github.com/QCGroup/quad-lyap-first-order}.
		
		\section*{Acknowledgments}
		{A.~Taylor was supported by the European Research Council (ERC) under the European Union's Horizon 2020 research and innovation program (grant agreement 724063). B.~Van~Scoy and L. Lessard were supported by the National Science Foundation under Grants No.~1656951 and~1750162.
		
		This work was launched during the \emph{LCCC Focus Period on Large-Scale and Distributed Optimization} organized by the Automatic Control Department of Lund University. The~authors thank the organizers.
		
		\nocite{boyd2004} 
		{ 
		\bibliography{references}
		\bibliographystyle{icml2018}
	}
		
		\appendix
		
		\section{Proof of Theorem~\ref{Thm:PD_quad} (Sampled Positive Definite Quadratics)}
		We begin by noting that $\sigma$ is positive definite if and only if the optimal value of the following problem is positive for any value of $\varepsilon>0$,
		\begin{align*}
		&\inf_\xi \ \sigma(\xi) \\
		&\text{s.t. } \left\{\begin{array}{l}
		(\x,\g,\f) \text{ as in~\eqref{Eq:xgf} generated by method~\eqref{Eq:method}} \\\qquad\text{ applied to $f\in\F$},\\
		\|\xi\|\geq \varepsilon.\end{array}\right.
		\end{align*}
		Next, we discretize the problem by replacing $f\in\F$ with the equivalent condition that the discrete set of points $\{(y_i,g_i,f_i)\}_{i\in\I_K}$ is $\F$-interpolable (recall that $\I_K=\{0,\hdots,K,\star\}$). Choosing a specific notion of distance for $\xi$, we can reformulate the previous statement as verifying that $p^{(d)}_\star(\varepsilon)>0$ for all $\varepsilon>0$ where
		\begin{align*}
		p_\star^{(d)}(\varepsilon) &\defeq \min_{\x,\g,\f} \ \sigma(\xi) \\
		&\ \text{s.t. } \left\{\begin{array}{l}
		\{(y_i,g_i,f_i)\}_{i\in\I_k}\text{ is $\F$-interpolable},\\
		(\x,\g,\f) \text{ as in~\eqref{Eq:xgf} generated by~\eqref{Eq:method}},\\
		\|\x\|^2+\|\g\|^2+\1^\tp \f = \varepsilon.\end{array}\right.
		\end{align*}
		Note that the last condition can equivalently be replaced by others (e.g., $\|\x\|^2=\varepsilon$ or $\|\g\|^2=\varepsilon$ or $\1^\tp\f=\varepsilon$), and that the optimal value of this problem is attained (using a short homogeneity argument with respect to $\varepsilon$). Using the necessary and sufficient conditions for the set of points $\{(y_i,g_i,f_i)\}_{i\in\I_k}$ to be $\F$-interpolable from Theorem~\ref{Thm:interp}, we have
		\begin{align*}
		p_\star^{(d)}(\varepsilon) &= \min_{\x,\g,\f} \ \sigma(\xi) \\
		&\ \text{s.t. } \left\{\begin{array}{l}
		\phi_{ij}\geq 0\text{ for all }i,j\in\I_K,\\
		(\x,\g,\f) \text{ as in~\eqref{Eq:xgf} generated by~\eqref{Eq:method}},\\
		\|\x\|^2+\|\g\|^2+\1^\tp \f = \varepsilon.\end{array}\right.
		\end{align*}
		Next, we define the Gram matrix $\G\in\mathbb{S}^{N+K+2}$ as ${\G\defeq\B^\tp\B}$ where
		\begin{align}\label{Eq:B}
		\B \defeq \bmat{x_{-N}-x_\star & \ldots & x_0-x_\star & g_0 & \ldots & g_K},
		\end{align}
		hence $\G$ is a standard Gram matrix containing all inner products between $x_i-x_\star$ for $i\in\{-N,\ldots,0\}$ and $g_i$ for $i\in\{0,\ldots,K\}$. Note that the quadratic $\sigma$ can be written as a function of the Gram matrix as
		\[
		\sigma(\G,\f) = \trace\bigl(Q \G\bigr) + q^\tp \f.
		\]
		Similarly, the interpolation conditions can also be reformulated with the Gram matrix as
		\[
		0 \leq \phi_{ij}(\G,\f) = \trace\bigl(M_{ij} \G\bigr) + m_{ij}^\tp \f
		\]
		where $M_{ij}$ and $m_{ij}$ are such that
		\[
		\phi_{ij} = \bmat{\x \\ \g}^\tp\!\!\! (M_{ij}\kron I_d)
		\bmat{\x \\ \g} + m_{ij}^\tp \f,
		\]
		for all $i,j\in\I_K$ (hence also $\star$ is in the index set).
		Therefore, we can reformulate the previous problem as the following rank-constrained semidefinite program:
		\begin{align*}
		p_\star^{(d)}(\varepsilon) =  &\min_{\G\in\mathbb{S}^{N+K+2},\f\in\real^{N+1}} \ \trace\bigl(Q\G\bigr) + q^\tp \f\\
		&\text{s.t. } \left\{\begin{array}{l}
		0\leq \trace\bigl(M_{ij} \G\bigr) + m_{ij}^\tp \f \for i,j\in\I,\\
		\trace(\G) + \1^\tp \f = \varepsilon,\\
		\f\geq 0,\\
		\G\succeq 0,\\
		\rank(\G)\leq d,\end{array}\right.
		\end{align*}
		where we remind the reader that $d$ is the dimension of the optimization problem~\eqref{Eq:prob}. Therefore, as discussed in~\cite{taylor2017smooth,taylor2017exact}, if we want a result that does not depend on the dimension (i.e, a $\sigma$ that is positive definite whatever the value of $d$), we have to verify that $p^{(\infty)}(\varepsilon)>0$ (which corresponds to assuming that $d\geq N+K+2$ since this is the dimension of $\G$). We then have the following semidefinite program:
		\begin{align*}
		p_\star^{(\infty)}(\varepsilon) =  &\min_{\G\in\mathbb{S}^{N+K+2},\f\in\real^{N+1}} \ \trace\bigl(Q\G\bigr) + q^\tp \f\\
		&\text{s.t. } \left\{\begin{array}{l}
		0\leq \trace\bigl(M_{ij} \G\bigr) + m_{ij}^\tp \f \for i,j\in\I,\\
		\trace(\G) + \1^\tp \f = \varepsilon,\\
		\f\geq 0,\\
		\G\succeq 0.\end{array}\right.
		\end{align*}
		A Slater point for this problem (i.e., a feasible point such that $\G\succ 0$) is obtained in the following section, so the optimal value of the primal problem is equal to the optimal value of the dual, which is given by
		\begin{align*}
		d^{(\infty)}(\varepsilon) \defeq  &\max_{\{\lambda_{ij}\},\nu} \ \nu\,\varepsilon \\
		&\text{s.t. } \left\{\begin{array}{l} \lambda_{ij}\geq 0\text{ for all }i,j\in\I,\\
		Q-\sum_{i,j\in\I} \lambda_{ij}\,M_{ij}\succeq \nu I_{N+K+2}, \\
		q-\sum_{i,j\in\I} \lambda_{ij}\,m_{ij}\geq \nu\1_{N+1}. \end{array}\right.\notag
		\end{align*}
		The theorem is then proved by noting the equivalence
		\[
		p^{(\infty)}(\varepsilon)>0, \ \forall \varepsilon>0 \ \
		\Longleftrightarrow \ \ d^{(\infty)}(\varepsilon)>0, \ \forall \varepsilon>0,
		\]
		where the last statement amounts to verifying that
		\[
		Q-\sum_{i,j\in\I} \lambda_{ij}\,M_{ij}\succ 0, \quad
		q-\sum_{i,j\in\I} \lambda_{ij}\,m_{ij}>0. \tag*{\qedhere}
		\]

		\section{Slater Point for Proof of Theorem~\ref{Thm:PD_quad}}
		In this section, we show how to construct a Slater point~\cite{boyd2004} for the primal semidefinite program in the proof of Theorem~\ref{Thm:PD_quad}. The construction is similar to Section 2.1.2 of~\cite{nesterov2004} and the proof of Theorem~6 in~\cite{taylor2017smooth}.
		
		Consider applying the first-order iterative fixed-step method~\eqref{Eq:method} with $\alpha\neq0$ and $\gamma_0\neq 0$ for $K$ iterations to the function $f(x)=\tfrac{1}{2} x^\tp H x$ where $H\in\mathbb{S}^d$ with $d\geq N+K+2$ is the positive definite tridiagonal matrix defined by
		\[
		[H]_{ij} = \begin{cases} 2 &\text{if }i=j \\ 1 &\text{if }|i-j|=1 \\ 0 & \text{otherwise} \end{cases}
		\]
		which has maximum eigenvalue $L=2+2\cos\bigl(\pi/(d+1)\bigr)$. Define the matrix $\B$ in~\eqref{Eq:B}. Using the initial condition $x_i=e_{N+1+i}$ for $i=-N,\ldots,0$, we will show that $\B$ is upper triangular with nonzero diagonal elements, and hence full rank.
		
		Since $\gamma_0\neq 0$, $y_0$ has a nonzero element corresponding to $e_{N+1}$. Then $g_0=H y_0$ has a nonzero element corresponding to $e_{N+2}$ due to the tridiagonal structure of $H$. Furthermore, $y_0$ may only have nonzero elements corresponding to $e_j$ for $j=1,\ldots,N+1$, so $g_0$ must have zero components corresponding to $e_j$ for all $j>N+2$.
		
		We now continue by induction. Assume that $g_k$ has a nonzero element corresponding to $e_{N+2+k}$ and zero elements corresponding to $e_i$ for all $i>N+2+k$. Since $\alpha\neq 0$, $x_{k+1}$ has a nonzero element corresponding to $e_{N+2+k}$ and zero elements corresponding to $e_i$ for all $i>N+2+k$. Then since $\gamma_0\neq 0$, $y_{k+1}$ also has the same structure. Due to the tridiagonal structure of $H$, $g_{k+1}$ then has a nonzero element corresponding to $e_{N+3+k}$ and zero elements corresponding to $e_i$ for all $i>N+3+k$. Therefore, by induction we have shown that for all $k\geq 0$, $g_k$ has a nonzero element corresponding to $e_{N+2+k}$ and zero elements corresponding to $e_i$ for all $i>N+2+k$. For $P$ to be upper triangular, we need $g_K$ to have dimension at least $N+2+K$. Thus, if $d\geq N+2+K$ where $K$ is the number of iterations, then $\B$ is upper triangular with nonzero entries on the diagonal, and therefore has full rank.
		
		In order to make the statement hold for general $\mu<L$, observe that the tridiagonal structure of $H$ is preserved under the operation
		\[
		\tilde{H} = \bigl(H-\lambda_\text{min}(H)\,I\bigr) \frac{L-\mu}{\lambda_\text{max}(H)-\lambda_\text{min}(H)} + \mu I
		\]
		where $\mu I\preceq \tilde{H}\preceq L I$.
		
		Since $\B$ has full rank, the Gram matrix $\G = \B^\tp \B\succ 0$ is positive definite. Therefore, the primal semidefinite program satisfies Slater's condition.

		\section{Steepest Descent}
		In this section, we show a similar formulation as~\sdp\ for steepest descent. In this case, the analysis was not \emph{a priori} guaranteed to be tight, due to the line search conditions. In order to encode the line search, we use the corresponding optimality conditions, as in~\cite{deKlerk2017}:
		\begin{equation}\label{eq:ELS}
		\begin{aligned}
		\langle x_{k+1}-x_k,\ g_{k+1}\rangle&=0,\\
		\langle g_k,\ g_{k+1}\rangle&=0,
		\end{aligned}			
		\end{equation}with $g_k=\nabla f(x_k)$. For the Lyapunov function structure, we choose the following
		\begin{align*}
		V(\xi_k)=\bmat{x_k-x_\star\\g_k}(P\otimes I_d)\bmat{x_k-x_\star\\g_k}^\tp+p (f_k-f_\star).
		\end{align*}
		In order to develop a SDP formulation for this problem we follow the same steps as for~\sdp, starting with {\bf Step 1} (see Section~\ref{Sec:sdp}): we define the following row vectors in~$\real^{2}$ 
		\[\bay_0^{(0)}\defeq\e_1^\tp,\, \bax_0^{(0)}\defeq \e_1^\tp,\, \bag_0^{(0)}\defeq\e_2^\tp,\]
		and $\bay_\star^{(0)}=\bax_\star^{(0)}=\bag_\star^{(0)}\defeq\0^\tp$, along with the scalars $\baf_0^{(0)}\defeq1$ and $\baf_\star^{(0)}\defeq0$. In addition, we use the following vectors in $\real^{4}$ 
		\begin{align*}
		&\bax_0^{(1)}\defeq\e_1^\tp,\, \bax_1^{(1)}\defeq\e_2^\tp,\\ &\bay_0^{(1)}\defeq\e_2^\tp,\,\bay_1^{(1)}\defeq\e_2^\tp,\\ &\bag_0^{(1)}=\e_3^\tp,\, \bag_1^{(1)}\defeq\e_4^\tp,
		\end{align*}
		and $\bay_\star^{(1)}=\bax_\star^{(1)}=\bag_\star^{(1)}\defeq\0^\tp$, along with $\baf^{(1)}_0,\baf^{(1)}_1,\baf^{(1)}_\star\in\real^2$ such that $\baf^{(1)}_0\defeq\e_1^\tp$, $\baf^{(1)}_1\defeq\e_2^\tp$ and $\baf^{(1)}_\star\defeq\0^\tp$.
		
		Because of the algorithm, {\bf Step 2} is slightly different as before; we encode the line search constraints~\eqref{eq:ELS} using
		\begin{equation*}
		\begin{aligned}
		A_{1}&=\bmat{\bax_0^{(1)}\\\bax_1^{(1)}\\\bag^{(1)}_1}^\tp\bmat{0 & 0 & -1\\ 0 &0 &1\\ -1& 1& 0}\bmat{\bax_0^{(1)}\\\bax_1^{(1)}\\\bag^{(1)}_1},\\
		A_{2}&= \bmat{\bag_0^{(1)}\\ \bag_1^{(1)}}^\tp \bmat{0 & 1 \\ 1 & 0}\bmat{\bag_0^{(1)}\\ \bag_1^{(1)}}.
		\end{aligned}
		\end{equation*}
		
		The subsequent steps ({\bf Step 3} and {\bf Step 4}) are exactly the same as in Section~\ref{Sec:sdp}. We finally obtain a slightly modified version of the feasibility problem~\sdp:
		\begin{align*}
		\smash[b]{\feasible_{\substack{P\in\mathbb{S}^{2}\\p\in\real^{1}\\\nu_1,\nu_2\in\real\\ \{\lambda_{ij}\}\\ \{\eta_{ij}\}}}}
		& \quad 0 \prec V_0^{(0)} - \sum_{i,j\in\I_0} \lambda_{ij}\,M_{ij}^{(0)} \\
		& \quad 0 < v_0^{(0)} - \sum_{i,j\in\I_0} \lambda_{ij}\,m_{ij}^{(0)} \\
		& \quad 0 \succeq \Delta V_0^{(1)} + \!\!\!\sum_{i,j\in\I_{1}} \!\!\eta_{ij}\,M_{ij}^{(1)}+\sum_{i=1}^2\nu_i \,A_i\\
		& \quad 0 \geq \Delta v_0^{(1)} + \!\!\!\sum_{i,j\in\I_{1}} \!\! \eta_{ij}\,m_{ij}^{(1)} \\
		& \quad 0 \leq \lambda_{ij} \for i,j\in\I_0 \\
		& \quad 0 \leq \eta_{ij} \for i,j\in\I_{1}
		\end{align*}
		with $\I_0\defeq\{0,\star\}$ and $\I_1\defeq\{0,1,\star\}$.

		\section{SDP for HBM with Subspace Searches}
		We follow the steps of the previous section for steepest descent; we only make the following adaptations: (i) we look for a quadratic Lyapunov function with the states
		\begin{align*}V(\xi_k)=\bmat{x_{k}-x_\star\\x_{k-1}-x_\star\\g_{k}\\g_{k-1}}&(P\otimes I_d)\bmat{x_{k}-x_\star\\x_{k-1}-x_\star\\g_{k}\\g_{k-1}}^\tp\\&+p^\tp \bmat{f_{k}-f_\star\\f_{k-1}-f_\star},
		\end{align*}
		(ii) we adapt the initialization ({\bf Step 1}), (iii) adapt the line search conditions ({\bf Step 2}) and (iv) obtain a slightly modified version of the SDP.
		
		For (ii), we adapt the initialization procedure ({\bf Step 1}) as follows. We define the following row vectors of $\real^4$:
		\begin{align*}
		&\bax_{0}^{(1)}\defeq\e_1^\tp,\, \bax_{1}^{(1)}\defeq\e_2^\tp,\,\\& \bay_{0}^{(1)}\defeq\e_1^\tp,\, \bay_{1}^{(1)}\defeq\e_2^\tp,\,\\ & \bag_{0}^{(1)}\defeq\e_3^\tp,\, \bag_{1}^{(1)}\defeq\e_4^\tp,
		\end{align*}
		along with $\bay_\star^{(1)}=\bax_\star^{(1)}=\bag_\star^{(1)}\defeq\0^\tp$, and the following in~$\real^2$: $\baf^{(1)}_0\defeq\e_1^\tp$, $\baf^{(1)}_1\defeq\e_2^\tp$ and $\baf^{(1)}_\star\defeq\0^\tp$. We also define the following row vectors in $\real^7$:
		\begin{align*}
		\bax_{-1}^{(2)}\defeq\ \e_1^\tp,\, \bax_{0}^{(2)}\defeq\e_2^\tp&,\, \bax_{1}^{(2)}\defeq\e_3^\tp,\, \bax_{2}^{(2)}\defeq\e_4^\tp,\\ 
		\bay_{0}^{(2)}\defeq\bax_{0}^{(2)},\, \bay_{1}^{(2)}&\defeq\bax_{1}^{(2)},\, \bay_{2}^{(2)}\defeq\bax_{2}^{(2)},\\
		\bag_{0}^{(2)}\defeq\e_5^\tp,\, \bag_{1}^{(2)}&\defeq\e_6^\tp,\, \bag_{2}^{(2)}\defeq\e_7^\tp,
		\end{align*}
		along with $y_\star^{(2)}=\bax_\star^{(2)}=\bag_\star^{(2)}\defeq\0^\tp$ and the vectors of~$\real^3$: $\baf^{(2)}_0\defeq\e_1^\tp$, $\baf^{(2)}_1\defeq\e_2^\tp$, $\baf^{(2)}_2\defeq\e_3^\tp$ and $\baf^{(2)}_\star\defeq\0^\tp$.
		
		Now for (iii) (or {\bf Step 2}), optimality of the search conditions can be 
		\begin{equation*}
		\begin{aligned}
		\langle x_{k+1}-x_k,\ g_{k+1}\rangle&=0,\\
		\langle x_k-x_{k-1},\ g_{k+1}\rangle&=0,\\
		\langle g_k; g_{k+1}\rangle&=0,
		\end{aligned}	
		\end{equation*}	
		which we can formulate in matrix form for $k\in\{0,1\}$:
		\begin{equation*}
		\begin{aligned}
			A_{1+k}&=\bmat{\bax_k^{(2)}\\\bax_{k+1}^{(2)}\\\bag^{(2)}_{k+1}}^\tp \bmat{0 & 0 & -1\\ 0 &0 &1\\ -1& 1& 0}\bmat{\bax_k^{(2)}\\\bax_{k+1}^{(2)}\\\bag^{(2)}_{k+1}}\\
			A_{3+k}&=\bmat{\bax_{k-1}^{(2)}\\\bax_{k}^{(2)}\\\bag^{(2)}_{k+1}}^\tp\bmat{0 & 0 & -1\\ 0 &0 &1\\ -1& 1& 0}\bmat{\bax_{k-1}^{(2)}\\\bax_{k}^{(2)}\\\bag^{(2)}_{k+1}}\\
			A_{5+k}&=\bmat{\bag_k^{(2)}\\\bag_{k+1}^{(2)}}^\tp\bmat{0 & 1\\ 1& 0}\bmat{\bag_k^{(2)}\\\bag_{k+1}^{(2)}}
			\end{aligned}
		\end{equation*}
		
		The subsequent steps ({\bf Step 3} and {\bf Step 4}) are exactly the same as in Section~\ref{Sec:sdp}. We finally obtain a slightly modified version of the feasibility problem~\sdp:
		\begin{align*}
		\smash[b]{\feasible_{\substack{P\in\mathbb{S}^{2}\\p\in\real^{1}\\\nu_1,\hdots,\nu_6\in\real\\ \{\lambda_{ij}\}\\ \{\eta_{ij}\}}}}
		& \quad 0 \prec V_1^{(1)} - \sum_{i,j\in\I_1} \lambda_{ij}\,M_{ij}^{(1)} \\
		& \quad 0 < v_1^{(1)} - \sum_{i,j\in\I_1} \lambda_{ij}\,m_{ij}^{(1)} \\
		& \quad 0 \succeq \Delta V_1^{(2)} + \!\!\!\sum_{i,j\in\I_{2}} \!\!\eta_{ij}\,M_{ij}^{(2)}+\sum_{i=1}^6\nu_i \,A_i\\
		& \quad 0 \geq \Delta v_1^{(2)} + \!\!\!\sum_{i,j\in\I_{2}} \!\! \eta_{ij}\,m_{ij}^{(2)} \\
		& \quad 0 \leq \lambda_{ij} \for i,j\in\I_1 \\
		& \quad 0 \leq \eta_{ij} \for i,j\in\I_{2}
		\end{align*}
		with $\I_1\defeq\{0,1,\star\}$ and $\I_2\defeq\{0,1,2,\star\}$. The corresponding results are presented on Figure~\ref{fig:ELS_GM_HBM}.
		
		\section{SDP for FGM with Scheduled Restarts}
		This setting goes slightly beyond the fixed-step model presented in~\eqref{Eq:method}, as the step sizes depend on the iteration. We study the algorithm described by~\eqref{Eq:restarted_method}, which does $N$ steps of the standard fast gradient method~\cite{nesterov1983method} before restarting. We study the convergence of this scheme using quadratic Lyapunov functions of the form
		\begin{equation}\label{eq:lyap_with_restart}
		\bmat{y_k^N-x_\star \\ \df(y_k^N)}^\tp (P\otimes I_d) \bmat{y_k^N-x_\star\\ \df(y_k^N)} +p\, [f(y_k^N)-f(x_\star)].
		\end{equation}
		Let us perform similar steps as for~\sdp\ for constructing the corresponding SDP. We start with the initialization procedure {\bf Step 1}. Let us define the following row vectors in~$\real^2$:
		\[\bay_0^{(1)}=\bax_0^{(1)}\defeq\e_1^\tp,\, \bag_0^{(1)}\defeq\e_2^\tp, \]
		and $\bax_\star^{(1)}=\bag_\star^{(1)}\defeq\0^\tp$, along with the scalars $f_0^{(1)}\defeq1$ and $f_\star^{(1)}\defeq0$. In addition, we define the row vectors of $\real^{N+2}$:
		\[\bax_0^{(N+1)}\defeq\e_1^\tp,\, \bag^{(N+1)}_k\defeq\e_{2+k}^\tp, \]
		for $k=0,\hdots,N$, along with $\bax_\star^{(N+1)}=\bag^{(N+1)}_\star\defeq\0^\tp$ and the row vectors $\baf^{(N+1)}_k\in\real^{N+1}$ defined as $\baf^{(N+1)}_k\defeq\e_{1+k}^\tp$ and $\baf^{(N+1)}_\star\defeq\0^\tp$.
		
		{\bf Step 2} Apply one complete loop of the algorithm as follows: for $k=0,\hdots,N-1$ define the sequence of row vectors:
		\begin{equation*}
		\begin{aligned}
		\baz_{k+1}^{(N+1)}   &= \bay_k^{(N+1)} - \frac1L \bag^{(N+1)}_k,\\
		\bay_{k+1}^{(N+1)}   &= \baz^{(N+1)}_k+\frac{\beta_k-1}{\beta_{k+1}} (\baz^{(N+1)}_k- \baz^{(N+1)}_k),
		\end{aligned}
		\end{equation*}
		with $\beta_0\defeq1$ and $\beta_{k+1}\defeq \frac{1+\sqrt{4\beta_k^2+1}}{2}$. For complying with the notations of the paper, we define the sequence
		\[x_k^{(K)}\defeq y_k^{(K)}\]
		for $k=0,\hdots,N$ and $K\in\{1,N+1\}$. Then, using the sets $\I_{1}\defeq\{0,\star\}$ and $\I_{N+1}\defeq\{0,\hdots,N,\star\}$, the other stages follow from the same lines as {\bf Step 3}, and {\bf Step 4} with the slight modification of the expression for the rate
		\begin{align*}
		\Delta v_k^{(N+1)} &\defeq v_{k+1}^{(N+1)} - \rho^{2N}\,v_k^{(N+1)}, \\
		\Delta V_k^{(N+1)} &\defeq V_{k+1}^{(N+1)} - \rho^{2N}\,V_k^{(N+1)},
		\end{align*}
		and {\bf Step 5} follows as in Section~\ref{Sec:sdp}:
		\begin{align*}
		\smash[b]{\feasible_{\substack{P\in\mathbb{S}^{2(N+1)}\\p\in\real^{N+1}\\ \{\lambda_{ij}\}\\ \{\eta_{ij}\}}}}
		& \quad 0 \prec V_{1}^{(1)} - \sum_{i,j\in\I_1} \lambda_{ij}\,M_{ij}^{(1)} \\
		& \quad 0 < v_{1}^{(1)} - \sum_{i,j\in\I_1} \lambda_{ij}\,m_{ij}^{(1)} \\
		& \quad 0 \succeq \Delta V_N^{(N+1)} + \!\!\!\sum_{i,j\in\I_{N+1}} \!\!\eta_{ij}\,M_{ij}^{(N+1)} \\
		& \quad 0 \geq \Delta v_N^{(N+1)} + \!\!\!\sum_{i,j\in\I_{N+1}} \!\! \eta_{ij}\,m_{ij}^{(N+1)} \\
		& \quad 0 \leq \lambda_{ij} \for i,j\in\I_1 \\
		& \quad 0 \leq \eta_{ij} \for i,j\in\I_{N+1}
		\end{align*}
		(note that the sets $\I_1$ and $\I_{N+1}$ should use the definitions of this section). Numerical results are available in Figure~\ref{fig:restart}.
		
\end{document}